\documentclass[12pt,reqno]{amsart}

\usepackage{amsmath,amssymb,amscd,bm, graphicx, amsthm,
  enumerate, hyperref}
  \usepackage{framed}
      \setlength{\textwidth}{\paperwidth}
\addtolength{\textwidth}{-1.9in}
\calclayout

\usepackage{mathrsfs}
\usepackage{mathtools}
\usepackage[all]{xy}
\usepackage{amsmath}
\usepackage{color,xcolor}
\definecolor{darkred}{rgb}{1,0,0} %can change the intensity in [0,1]
\definecolor{darkgreen}{rgb}{0,0.8,0}
\definecolor{darkblue}{rgb}{0,0,1}
\usepackage{mathtools}

\DeclarePairedDelimiter\floor{\lfloor}{\rfloor}

%\setlength{\textwidth}{\paperwidth}
%\addtolength{\textwidth}{-2in}
%\calclayout

\hypersetup{colorlinks,
linkcolor=darkblue,
filecolor=darkgreen,
urlcolor=darkred,
citecolor=darkred}

%%%%%%%%%%%%%%%%%%%%%%%%%%%%%%
%   TEXT FORMATTING
\setlength{\smallskipamount}{6pt}
\setlength{\medskipamount}{10pt}
\setlength{\bigskipamount}{16pt}
 
%%%%%%%%%%%%%%%%%%%%%%%%%%%%%%

\numberwithin{equation}{section}

\theoremstyle{plain}

\theoremstyle{plain}
\newtheorem{theorem}{Theorem}
\numberwithin{theorem}{section}
\newtheorem{proposition}[theorem]{Proposition}
\newtheorem{lemma}[theorem]{Lemma}

\theoremstyle{definition}

%\theoremstyle{proof}
%\newtheorem{proof of main}{Proof of Main Theorem}
%\numberwithin{examplex}{section}

\theoremstyle{definition}
\newtheorem{remark}[theorem]{Remark}
\newtheorem{definition}[theorem]{Definition}
\newtheorem{notation}[theorem]{Notation}

%\newcommand{\trans}{\mathsf{Trans}_G}

%first arg = stack name, sec arg = open cover name, third arg = covered object

%%%%%%%%%%%%%%%%%%%%%%%%%%%%%%%%%%%%%%%%%%%%%%%%%%%%%%%%%%%%%%%%%%%%%%%%%%%%%%%%
% categories
%%%%%%%%%%%%%%%%%%%%%%%%%%%%%%%%%%%%%%%%%%%%%%%%%%%%%%%%%%%%%%%%%%%%%%%%%%%%%%%%

\newcommand{\interior}[1]{%
  {\kern0pt#1}^{\mathrm{o}}%
}
%{\mathsf{DiffeolSp}}

      \usepackage{mdframed}
%\newcommand{\Bi}{\mathsf{Bi}}

%%%%%%%%%%%%%%%%%%%%%%%%%%%%%%%%%%%%%%
%  caligraphic letters
%%%%%%%%%%%%%%%%%%%%%%%%%%%%%%%%%%%%%%
\newtheoremstyle{named}{}{}{\itshape}{}{\bfseries}{.}{.5em}{\thmnote{#3 }#1}
\theoremstyle{named}

\DeclareMathOperator{\moda}{mod}
%%%%%%%%%%%%%%%%%%%%%%%%%%%%%%%%%%%%%%

%%%%%%%%%%%%%%%%%%%%%%%%%%%%%%%%%%%%%
% Ralph Smith’s Formal Script Font (rsfs): Use the “mathrsfs” package.
% \usepackage{mathrsfs}
%%%%%%%%%%%%%%%%%%%%%%%%%%%%%%%%%

% fraktur
%%%%%%%%%%%%%%%%%%%%%%%%%%%%%%%%%%%%%%
\newcommand{\norm}[1]{\left\lVert#1\right\rVert}

\DeclareMathOperator{\vol}{vol}
%\DeclareMathOperator{\supp}{supp}

%%%%%%%%%%%%%%%%%%%%%%%%%%%%%%%%%%%%%%
% blackboard bold
%%%%%%%%%%%%%%%%%%%%%%%%%%%%%%%%%%%%%%

%\endinput
%\newcommand    {\ynote}[1]   {     {{\ \scriptsize{#1}\ }} }

%%%%%%%%%%%%%%%%%%%%%%%%%%%%%%

\title[Spacings between the fractional parts of $\MakeLowercase{n}^{\MakeLowercase{d}}\alpha$]{The distribution of spacings between the fractional parts of $\boldsymbol{n^d\alpha}$}
%\author{Bruce C. Berndt}
\author{Martino Fassina}
\address{M.F.: Department of Mathematics, University of Illinois at Urbana-Champaign, 1409 West Green
Street, Urbana, IL 61801, USA}
\email{fassina2@illinois.edu}

\author{Sun Kim}
\address{S.K.: AORC, SungKyunKwan University, 2066 Seobu-ro, Suwon 16419, Korea}
\email{ssunny8079@hanmail.net}

\author{Alexandru Zaharescu}
\address{A.Z.: Department of Mathematics, University of Illinois at
Urbana-Cham\-paign  1409 West Green Street, Urbana, IL 61801, USA and
Simon Stoilow Institute of Mathematics of the Romanian Academy, P.O. Box 1-764, RO-014700 Bucharest, Romania}
\email{zaharesc@illinois.edu}

\begin{document}
\begin{abstract} We study the distribution of spacings between the fractional parts of $n^d\alpha$. For $\alpha$
of high enough Diophantine type %at least $d+1$,
we prove a necessary and sufficient condition for $n^d\alpha\mod 1, 1\leq n\leq N,$ to be Poissonian as $N\to \infty$ along a suitable subsequence.
\end{abstract}
\subjclass[2010]{Primary 11K06. Secondary 11J71.}
\keywords{Correlations, fractional parts, local spacings, Diophantine type, curves over finite fields.}
\maketitle
\section{Introduction}

Let $f(x)$ be a polynomial, and consider the sequence of fractional parts $(\{f(n)\})_{n\in\mathbb{N}}$. It is of considerable interest to study the distribution of the spacings between members of the sequence. This problem arose in the context of the distribution of spacings between the energy levels of integrable systems \cite{BT77, CGI87}. When $f(x)=\alpha x$, the spacings are essentially those of the energy levels of a two-dimensional harmonic oscillator (see \cite{PBG89}). In this case, the sequence is not random: for any $\alpha$ and $N$, the consecutive spacings of $n\alpha\mod 1, 1\leq n\leq N$, take at most three values (see \cite{S58} and \cite{Sw59}).

In the more challenging case $f(x)=\alpha x^d, d\geq 2$, Rudnick and Sarnak \cite{RS98} investigated the pair correlation function, which measures the density of differences between pairs of elements of the sequence. They proved that for almost all $\alpha$ the pair correlation function is Poissonian.
For another approach to this result see \cite{BZ00}.

For the case $d=2$ significantly more is known. Rudnick, Sarnak and one of the authors \cite{RSZ01}, \cite{Z03}
investigated higher order correlations of $n^2\alpha\mod 1$ (more details about these works will be discussed later in the paper).  Recently, the size of clusters of $n^2\alpha\mod 1$ played a relevant role in the work of Dunn and
one of the authors \cite{DZ19} on a second moment of central values of certain half integral weight Dirichlet series. See also the survey \cite{Sh12} for connections between this sequence and other related topics.

In the present paper we return to the original sequence $n^d\alpha \mod 1$ studied by Rudnick and Sarnak. We let $\alpha$ be an irrational number, $d$ an integer, $d\geq 2$, and consider the problem of
studying the distribution of local spacings between the elements of the sequence
$n^d\alpha\mod 1,$ with $1\leq n\leq N$.
%The case $d=2$ has been thoroughly investigated in \cite{RSZ01} and \cite{Z03}.
%We consider the correlation functions associated to the sequence.
For an integer $m\geq 2$, and a smooth compactly supported function $f\in C^{\infty}_c({\bf R}^{m-1})$, we
consider the $m$-level correlation sums
\begin{equation*}
R^{(m)}(N,d,\alpha,f)=\frac{1}{N}\sum_{\substack{1\leq n_1,\ldots,n_m\leq N\\n_j\text{ distinct}}}F_N(n_1^d\alpha-n_2^d\alpha,\dots,n^d_{m-1}\alpha-n_m^d\alpha),
\end{equation*}
where $F_N({\bf y})=\sum_{{\bf l}\in{\bf Z}^{m-1}}f(N({\bf l}+{\bf y}))$.
We say that the $m$-level correlation of the sequence $n^d\alpha\mod 1$ is Poissonian if for every $f\in C^{\infty}_c({\bf R}^{m-1})$ we have
\begin{equation*}
\lim_{N\to\infty} R^{(m)}(N,d,\alpha,f)=\int_{{\bf R}^{m-1}} f({\bf x})\, d{\bf x}.
\end{equation*}
We say that the $m$-level correlation of the sequence $n^d\alpha\mod 1$ is Poissonian along a sequence $N_j\rightarrow\infty$ if for every $f\in C^{\infty}_c({\bf R}^{m-1})$ we have
\begin{equation*}
\lim_{j\to\infty} R^{(m)}(N_j,d,\alpha,f)=\int_{{\bf R}^{m-1}} f({\bf x})\, d{\bf x}.
\end{equation*}
If the $m$-level correlation of $n^d\alpha\mod 1$ is Poissonian for any $m\geq 2$ along the same sequence $N_j\rightarrow \infty$, we simply say that $n^d\alpha\mod 1$ is {\bf Poissonian along $N_j$}.

Given $\alpha$ and a sequence of rationals $b_j/q_j\to\alpha$, we say that $n^d\alpha\mod 1$ is {\bf Poissonian with respect to} $(b_j/q_j)_{j\in\mathbb{N}}$ if there exists a sequence $\{N_j\}_{j\to\infty}$ with $\frac{\log N_j}{\log q_j}\to 1$ such that $n^d\alpha\mod 1$ is  Poissonian along $N_j$.

%In the case that the $\beta_j$ come from a random choice of points in $[0,1)$ these correlations satisfy
%\begin{equation}\label{Poisson}
%R^{(m)}(N,\alpha,f)\rightarrow \int_{0\leq x_2\leq\ldots\leq x_m} f(0,x_2,\dots,x_m)dx_2\dots dx_m
%\end{equation}
%as $N\rightarrow\infty$. If holds for a sequence $N_j\rightarrow\infty$, we say that $n^d\alpha\mod 1, 1\leq n\leq N$ is Poissonian along $N_j$.

%In this paper, we let $d\geq 2$ be an integer, and $\alpha$ an irrational number for which there are infinitely many rationals $b_j/q_j$ satisfying
%\begin{equation*}
%\bigg{|}\alpha-\frac{b_j}{q_j}\bigg{|}<\frac{1}{q_j^{d+1}}.
%\end{equation*}
%We characterize the situations in which $n^d\alpha\mod 1$ is Poissonian along some sequence $N_j$, showing that the {\em same obstruction} as in the case $d=2$ applies. That is, large square factors, and not large $d$-th powers, in the denominators of good approximants are responsible for the failure of $n^d\alpha\mod 1$ to be Poissonian along some sequence $N_j$. In particular, the Poissonian behavior of $n^2\alpha\mod 1$ and $n^d\alpha\mod 1$ for every $d\geq 2$ is the same, even though the sequences  have no obvious relations.

We say that $\alpha$ is {\em not of finite Diophantine type} if there exists a sequence of triples $(b_j,q_j,k_j)$ of integers with $k_j\to\infty$ such that for every $j$ we have
\begin{equation}\label{sw}
\bigg{\vert}\alpha-\frac{b_j}{q_j}\bigg{\vert}\leq \frac{1}{q_j^{k_j}}.
\end{equation}

%\alpha and a sequence a_j/q_j ---> \alpha, we say that \alpha n^d mod 1 is Poissonian with respect to the sequence  a_j/q_j  provided that there exists a sequence N_j with (log N_j) / (log q_j) -> 1 such that
%alpha n^d mod 1 is Poissonian along N_j.

%We need some version of the following (proving this will be the hard part).
Our main objective is to prove the following surprising result. %curious - looking result.
\begin{theorem}\label{curious}
Let $\alpha$ be an irrational number, not of finite Diophantine type, and let $(b_j,q_j)$ be a sequence such that $b_j/q_j\to\alpha$ as in \eqref{sw}. Then there are two alternatives:
\begin{enumerate}
\item Either $n^d\alpha\mod 1$ is Poissonian with respect to $(b_j/q_j)_{j\in\mathbb{N}}$ for {\bf every} $d\geq 2$;
\item or $n^d\alpha\mod 1$ is Poissonian with respect to $(b_j/q_j)_{j\in\mathbb{N}}$ for {\bf no} $d\geq 2$.
\end{enumerate}
\end{theorem}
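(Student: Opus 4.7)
The strategy is to translate Poissonicity into an asymptotic vanishing statement for complete Weyl sums modulo $q_j$, and to show that this vanishing is controlled by the prime factorization of $q_j$ in a way that is insensitive to $d\ge 2$.

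\textbf{Fourier expansion and rational reduction.} Applying Poisson summation to $F_N$, one has
\begin{equation*}
R^{(m)}(N,d,\alpha,f)-\int_{\R^{m-1}}\!f\,d\mathbf{x} = \frac{1}{N^{m}}\sum_{\mathbf{k}\in\Z^{m-1}\setminus\{0\}} \widehat{f}\!\left(\tfrac{\mathbf{k}}{N}\right)T_{\mathbf{k}}(N,d,\alpha),
\end{equation*}
where, setting $c_j=k_j-k_{j-1}$ with the conventions $k_0=k_m=0$,
\begin{equation*}
T_{\mathbf{k}}(N,d,\alpha)=\sum_{\substack{1\le n_1,\ldots,n_m\le N\\ n_j\text{ distinct}}}\!e\!\left(\alpha\sum_{j=1}^{m}c_j\,n_j^d\right).
\end{equation*}
Poissonicity along $N_j$ amounts to $T_{\mathbf{k}}(N_j,d,\alpha)=o(N_j^m)$ for every fixed $\mathbf{k}\ne 0$. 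Since $|\alpha-b_j/q_j|\le q_j^{-k_j}$ with $k_j\to\infty$ and $N_j=q_j^{1+o(1)}$, each phase differs by $O(|\mathbf{k}|N_j^{d}q_j^{-k_j})=o(1)$ from the corresponding phase with $\alpha$ replaced by $b_j/q_j$. Hence Poissonicity with respect to $(b_j/q_j)$ is equivalent to $T_{\mathbf{k}}(N_j,d,b_j/q_j)=o(N_j^m)$ for every fixed $\mathbf{k}\ne 0$.

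\textbf{Factorisation into Weyl sums.} The exponential $e((b_j/q_j)\sum_j c_j n_j^d)$ depends only on $n_j\bmod q_j$. Splitting each $n_j$-sum into residue classes modulo $q_j$ and separating the diagonal contributions (of size $O(N_j^{m-1})$) via inclusion--exclusion, we expect
\begin{equation*}
T_{\mathbf{k}}(N_j,d,b_j/q_j)=\left(\frac{N_j}{q_j}\right)^{\!m}\prod_{j=1}^{m}W_d(b_j c_j,q_j)+o(N_j^m),
\end{equation*}
where $W_d(\lambda,q):=\sum_{r=1}^{q}e(\lambda r^d/q)$ is the complete Weyl sum. Thus Poissonicity is equivalent to the cancellation condition
\begin{equation*}
\prod_{j=1}^{m}|W_d(b_j c_j,q_j)|=o(q_j^m) \qquad \text{for every fixed } \mathbf{k}\ne 0.
\end{equation*}

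\textbf{The dichotomy and the main obstacle.} By the Chinese Remainder Theorem $W_d(\lambda,q_j)$ factorises over the prime power divisors $p^e\parallel q_j$, and on each factor with $p$ large relative to $d$ and $p\nmid\lambda$ Weil's bound yields $|W_d(\lambda,p^e)|\ll_d p^{e/2}$. The only obstruction to $|W_d(\lambda,q_j)|=o(q_j)$ is therefore the part of $q_j$ supported on prime powers $p^e\parallel q_j$ with $p$ bounded. Exploiting the freedom $\log N_j/\log q_j\to 1$ (which permits passage to arbitrary subsequences) together with $k_j\to\infty$, one can reduce the cancellation condition to the purely multiplicative, $d$-free statement that for every fixed prime $p$ the $p$-part of $q_j$ is $o(q_j)$ along the chosen subsequence. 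This yields the dichotomy. The main technical difficulty is to make this reduction rigorous: one must handle uniformly the prime power factors $p^e\parallel q_j$ with $p$ small compared to $d$, where the Weil bound degrades, and show that these contributions either give full cancellation (the Poissonian case, recovered on a subsequence) or produce a stable obstruction; the outcome in either case is independent of $d\ge 2$.
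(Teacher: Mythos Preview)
There is a genuine gap, and it lies in the step ``Poissonicity along $N_j$ amounts to $T_{\mathbf{k}}(N_j,d,\alpha)=o(N_j^m)$ for every fixed $\mathbf{k}\ne 0$.'' The Fourier expansion of $R^{(m)}-\int f$ is a sum over all $\mathbf{k}\in\Z^{m-1}\setminus\{0\}$, and because $\widehat f(\mathbf{k}/N)$ is essentially supported on $|\mathbf{k}|\lesssim N$, roughly $N^{m-1}$ frequencies contribute. Showing that each individual $T_{\mathbf{k}}$ is $o(N_j^m)$ does not control this sum, and conversely Poissonicity does not force any single $T_{\mathbf{k}}$ to be small. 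Your subsequent reduction to the product of complete Weyl sums $\prod_i W_d(b_jc_i,q_j)$ therefore does not characterise Poissonicity.

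This is not a technicality; it leads to a wrong prediction. Take $q_j=p^{e_j}$ with $p$ fixed and $e_j\to\infty$. For $(\lambda,p)=1$ and $p\nmid d$ one has the recursion $W_d(\lambda,p^e)=p^{d-1}W_d(\lambda,p^{e-d})$, whence $|W_d(\lambda,q_j)|\ll q_j^{1-1/d}=o(q_j)$. Your criterion would then declare $n^d\alpha$ Poissonian with respect to such $(b_j/q_j)$ for every $d\ge 2$. But the paper's divergence lemma (Lemma~\ref{div}) shows the opposite: writing $q_j=\tilde q_j v_j^2$ with $v_j\sim q_j^{1/2}$, the $m$-level correlation blows up for $m$ large. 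This also shows that the obstruction is not ``primes $p$ bounded relative to $d$'' as you suggest, but rather the presence of large \emph{square} factors in $q_j$, independently of the size of the primes involved.

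The paper's route is genuinely different. It does not factor into one-variable Weyl sums at all; instead it completes the $m$-fold sum and reduces to counting $\Z_q$-points on the curves $x_i^d-x_{i+1}^d=a_i$ (Sections~\ref{SectionCurve}--\ref{SectionCount}), proves their irreducibility, and applies the Riemann hypothesis for curves together with the Bombieri--Weil inequality. This yields the convergence Theorem~\ref{T2}, from which Theorem~\ref{main} extracts the $d$-independent criterion $\log\tilde q_j/\log q_j\to 1$; the dichotomy of Theorem~\ref{curious} is then immediate.
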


Here the hypothesis that $\alpha$ is not of finite Diophantine type is used only in passing from the distribution of $n^d\alpha \mod 1$ to the distribution of $n^d(b_j/q_j)\mod 1$. We point out that for different values of $d$, the corresponding sequences $n^d(b_j/q_j)\mod 1$ have no obvious relations. Nevertheless, as we shall see later,
% prove in Theorem \ref{main},
the {\bf same exact obstruction} to being Poissonian along a sequence applies
simultaneously for all $d\geq 2$. %, and it consists in the presence of large square factors in the denominators $q_j$ of good approximants.
%%%%%This obstruction is well known in the case $d=2$ (see \cite{RSZ01} and \cite{Z03}). % and provides a necessary and sufficient condition for $n^2\alpha\mod 1$ to be Poissonian along some sequence .

%For an irrational number $\alpha$ such that there are infinitely many rationals $b_j/q_j$ satisfying
%\begin{equation*}
%\bigg{|}\alpha-\frac{b_j}{q_j}\bigg{|}<\frac{1}{q_j^{3}},
%\end{equation*}
%a decisive criterion for $n^2\alpha\mod 1$ to be Poissonian along some sequence $N_j$ was proved in \cite{Z03}. The obstruction to being Poissonian consists in the presence of large square factors in the denominators $q_j$ of good approximants.

\section{A curve over a finite field}\label{SectionCurve}

Studying the spacing distribution of sequences of the form $n^db_j\,(\text{mod }q_j), 1\leq n\leq N$,
leads naturally to a point count on curves over finite fields. In this section we begin the investigation of such curves.

Let $q$ be a prime number, and let ${\bm Z}_q={\bm Z}/q{\bm Z}$ denote the field with $q$ elements. We let $k=\overline{{\bm Z}}_q$ be the algebraic closure of ${\bm Z}_q$. Let $m,d$ be integers, with $m\geq 2, d\geq 2.$ %and $q\nmid d$.
We will consider polynomials of degree $d$ in the ring $k[x_1,\dots, x_{m}]$. Let ${\bm a}=(a_1,\dots,a_{m-1})\in k^{m-1}$. For $j=1,\dots, m-1$, we define $g_j\in k[x_1,\dots,x_m]$ by
\begin{equation}\label{g}
g_j:=x_j^d-x_{j+1}^d-a_j.
\end{equation}
Let $\mathcal{C}(d,{\bm a},q)$ be the curve defined in $k^m$ by the system
\begin{equation}\label{system}
g_j=0,\quad j=1,\dots, m-1.
\end{equation}

The goal of this section is to prove the following criterion for the irreducibility of $\mathcal{C}(d,{\bm a},q)$.
\begin{proposition}\label{irred}
Assume that $q\nmid d$. Then the curve $\mathcal{C}(d,{\bm a},q)$ is irreducible in $k^{m}$ if and only if, for $i=1,\dots, m-1$, the partial sums $A_i=\sum^{m-1}_{k= i}a_k$ are all distinct and non-zero.
\end{proposition}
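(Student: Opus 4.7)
The plan is to telescope the defining equations and recast irreducibility as a Kummer-independence statement over the rational function field $K := k(x_m)$. Summing $g_i + g_{i+1} + \cdots + g_{m-1}$ inside the defining ideal yields $x_i^d - x_m^d = A_i$ for each $i = 1, \ldots, m-1$; setting $b_i := x_m^d + A_i$, the coordinate ring satisfies
\[
R := k[x_1, \ldots, x_m]/(g_1, \ldots, g_{m-1}) \;\cong\; \bigotimes_{i=1}^{m-1} k[x_m][x_i]/(x_i^d - b_i),
\]
which is a free $k[x_m]$-module of rank $d^{m-1}$ with basis $\{x_1^{e_1}\cdots x_{m-1}^{e_{m-1}} : 0 \le e_i < d\}$. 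Because $R$ is torsion-free over $k[x_m]$, irreducibility of $\mathcal{C}(d, \bm{a}, q)$ is equivalent to $R$ being a domain, equivalently to the finite $K$-algebra $R \otimes_{k[x_m]} K \cong K[x_1, \ldots, x_{m-1}]/(x_i^d - b_i)$ being a field.

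For necessity, the hypothesis $q \nmid d$ ensures $\mu_d \subset k$. If $A_i = 0$ for some $1 \le i \le m-1$, then in $R$
\[
0 \;=\; x_i^d - x_m^d \;=\; \prod_{\zeta \in \mu_d}(x_i - \zeta x_m),
\]
whereas each factor $x_i - \zeta x_m$ is a nonzero combination of the distinct free basis elements $x_i$ and $1$, hence nonzero in $R$. So $R$ has zero divisors. The case $A_i = A_j$ with $1 \le i < j \le m-1$ is handled identically using the factorization of $x_i^d - x_j^d$ in $R$.

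For sufficiency, assume the $A_i$ are distinct and non-zero. Because $K$ contains $\mu_d$ and $\mathrm{char}(K) \nmid d$, Kummer theory identifies $R \otimes K$ as a field of $K$-degree exactly $d^{m-1}$ precisely when the classes $[b_1], \ldots, [b_{m-1}] \in K^*/K^{*d}$ are $\mathbb{Z}/d\mathbb{Z}$-linearly independent. Verifying this independence is the main technical step, and I would carry it out via the divisor map on $\mathbb{P}^1_k$: since $k$ is algebraically closed (so $k^* = k^{*d}$), an element of $K^*$ lies in $K^{*d}$ if and only if every valuation of its divisor is divisible by $d$. For any exponents $(c_1, \ldots, c_{m-1}) \in \{0, \ldots, d-1\}^{m-1}$ with some $c_i \ne 0$, the hypothesis $A_i \ne 0$ gives $b_i$ exactly $d$ simple zeros (the $d$-th roots of $-A_i$), and the distinctness of the $A_j$ forces these zeros to be disjoint from the zero loci of the remaining $b_j$. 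Any such zero contributes valuation exactly $c_i \not\equiv 0 \pmod d$ to $\prod_j b_j^{c_j}$, so this product is not in $K^{*d}$. This establishes the required Kummer independence and completes the argument.
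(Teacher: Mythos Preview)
Your argument is correct and takes a genuinely different route from the paper. The paper first verifies that $\{g_1,\dots,g_{m-1}\}$ is a Gr\"obner basis (Lemma~\ref{GB}) and then runs the Gianni--Trager--Zacharias primality algorithm: at each stage it must show that $x_{i-1}^d-(x_i^d+a_{i-1})$ is irreducible over the fraction field $k'$ of $k[x_i,\dots,x_m]/(g_i,\dots,g_{m-1})$, which it does by induction via Lemma~\ref{TeoK} together with an explicit root-counting and derivative computation. You instead telescope at the outset to the diagonal presentation $x_i^d=x_m^d+A_i$ over $k[x_m]$, observe that the generic fibre over $K=k(x_m)$ is a compositum of Kummer extensions, and reduce everything to the $\mathbb{Z}/d\mathbb{Z}$-independence of the classes $[b_i]\in K^*/K^{*d}$, which follows from a one-line valuation check on $\mathbb{P}^1_k$. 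Your proof is shorter and more conceptual; the paper's is more elementary and self-contained (no appeal to Kummer theory), and its induction produces as a by-product the intermediate irreducibility statements that are later quoted verbatim in the proof of Lemma~\ref{LemmaRemainder} to establish $[k(Y_1,\dots,Y_m):k(Y_1)]=d^{m-1}$---though your Kummer-independence statement yields that degree just as directly.
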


%To prove Proposition \ref{irred} we will use an algorithm . Make use of Gr\"obner bases hence we introduce them.
We recall some notions from commutative algebra. The {\em lexicographic order} is an order $>$ on the monomials of $k[x_1,\dots,x_m]$ such that $x_1^{\alpha_1}\cdots x_m^{\alpha_m}>x_1^{\beta_1}\cdots x_m^{\beta_m}$ exactly when the first non-zero entry of the vector $(\alpha_1-\beta_1,\dots,\alpha_m-\beta_m)$ is positive. For a polynomial $f\in k[x_1,\dots,x_m]$, we call its maximal monomial with respect to the lexicographic order the {\em initial term}. We denote the initial term of $f$ by $in(f)$. For a subset $S$ of a polynomial ring $k[x_1,\dots,x_m],$ we denote by $\langle S\rangle $ the ideal generated by $S$ in $k[x_1,\dots, x_m]$.%In the following we will use boldface for the initial term.
\begin{definition}
Let $I$ be an ideal in $k[x_1,\dots,x_m]$. A subset $G=\{g_1,\dots,g_s\}$ of $I$ is called a {\em Gr\"obner basis} of $I$ (with respect to the lexicographic order) if \[\big{\langle} in(f) \,\vert\, f\in I\big{\rangle}=\big{\langle} in(g_1),\dots, in(g_s)\big{\rangle}.\]
\end{definition}

\begin{lemma}\label{GB}
The set $G=\{g_1,\dots, g_{m-1}\}$, where the $g_j$ are defined as in \eqref{g}, is a Gr\"obner basis for the ideal $\langle g_1,\dots, g_{m-1} \rangle$.
\end{lemma}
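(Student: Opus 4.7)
The plan is to apply Buchberger's criterion, exploiting the fact that the initial monomials of the $g_j$ are pairwise coprime.

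First I would pin down $in(g_j)$. Under the lexicographic order with $x_1 > x_2 > \cdots > x_m$, the monomial $x_j^d$ exceeds $x_{j+1}^d$ (their exponent vectors differ first in position $j$, with entry $+d$), and both exceed the constant $-a_j$. Hence $in(g_j) = x_j^d$ for every $j = 1, \dots, m-1$.

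Next, for any $1 \le i < j \le m-1$, the monomials $in(g_i) = x_i^d$ and $in(g_j) = x_j^d$ involve disjoint variables and are therefore coprime as monomials. Buchberger's first (coprime leading terms) criterion then asserts that the $S$-polynomial $S(g_i, g_j)$ reduces to $0$ modulo $\{g_i, g_j\}$ alone, hence a fortiori modulo $G$. Since this holds for every pair, Buchberger's theorem yields that $G$ is a Gr\"obner basis of the ideal it generates. For a concrete check one can compute
$$S(g_i, g_j) = x_j^d g_i - x_i^d g_j = x_i^d x_{j+1}^d - x_{i+1}^d x_j^d + a_j x_i^d - a_i x_j^d,$$
and then substitute $x_i^d \equiv x_{i+1}^d + a_i \pmod{g_i}$ and $x_j^d \equiv x_{j+1}^d + a_j \pmod{g_j}$ throughout; the resulting four terms telescope to $0$.

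The main obstacle is essentially cosmetic: one must choose the monomial ordering so that $in(g_j) = x_j^d$ rather than $x_{j+1}^d$, which is automatic once we fix $x_1 > \cdots > x_m$. After that, the lemma is immediate and sets the stage for the real work in this section, the irreducibility criterion of Proposition~\ref{irred}.
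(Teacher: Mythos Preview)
Your proof is correct and in fact cleaner than the paper's. The paper proceeds by brute force: it writes out the $S$-polynomial $H_{ij}=x_j^d g_i - x_i^d g_j$ and then performs four explicit reduction steps $H^0\to H^1\to H^2\to H^3\to H^4=0$, subtracting multiples of $g_i$ and $g_j$ until nothing is left. You instead observe that $in(g_i)=x_i^d$ and $in(g_j)=x_j^d$ are coprime monomials and invoke Buchberger's first criterion, which guarantees at once that $S(g_i,g_j)$ reduces to $0$ modulo $\{g_i,g_j\}$. Your approach is shorter and conceptually explains \emph{why} the explicit reduction in the paper must terminate at zero; the paper's approach has the minor advantage of being entirely self-contained, relying only on the basic Buchberger algorithm rather than an auxiliary lemma about coprime leading terms.
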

\begin{proof}
For $1\leq i<j\leq m-1$, let $H_{ij}$ denote the \emph{s}-polynomial of the pair $(g_i,g_j)$ (see \cite[Definition 2.6]{St98}). That is, $H_{ij}$ is the unique linear combination of $g_i$ and $g_j$ canceling the initial terms $in(g_i), in(g_j)$, and whose coefficients are relatively prime monic monomials in $k[x_1,\dots,x_m]$. Hence,
\[H_{ij}=x_j^d(x_i^d-x_{i+1}^d-a_i)-x_i^d(x_j^d-x_{j+1}^d-a_j)=\bm{x_i^dx_{j+1}^d}+a_jx_i^d-x_{i+1}^dx_j^d-a_ix_j^d.\]
Here and in the following computations, the monomial in boldface is the initial term.
We now compute the remainder $R_G(H_{ij})$ of $H_{ij}$ by $G$ \cite[Definition 2.2]{St98}. Let $H^0=H_{ij}$. For $k\geq 1$, the polynomial $H^k$ is obtained by subtracting appropriate multiples of the elements of $G$ from $H^{k-1}$ in order to cancel its initial term $in(H^{k-1})$. We thus get
\[H^1=H^0-x_{j+1}^d(x_i^d-x_{i+1}^d-a_i)=\bm{a_j x_i^d}-x^d_{i+1}x_j^d+x^d_{i+1}x_{j+1}^d-a_ix_j^d+a_ix_{j+1}^d,\]
\[H^2=H^1-a_j(x_i^d-x_{i+1}^d-a_i)=\bm{-x_{i+1}^dx_j^d}+x_{i+1}^dx_{j+1}^d+a_jx_{i+1}^d-a_ix_j^d+a_ix^{d}_{j+1}+a_ia_j,\]
\[H^3=H^2+x_{i+1}^d(x_j^d-x_{j+1}^d-a_j)=\bm{-a_ix_j^d}+a_ix_{j+1}^d+a_ia_j,\]
\[H^4=H^3+a_i(x_j^d-x_{j+1}^d-a_j)=0.\]
Hence $R_G(H_{ij})=0$ for every $1\leq i<j\leq m-1$. The set $G$ is therefore a Gr\"obner basis by \cite[Proposition 2.7]{St98}.
\end{proof}

Let $R$ be an integral domain, and $I$ an ideal in the polynomial ring $R[x_1,\dots,x_m]$. P. Gianni, B. Trager, and G. Zacharias \cite{GTZ88} gave the following algorithm to check if $I$ is a prime ideal in $R[x_1,\dots,x_m]$. (See also \cite[Section 4 in Chapter 4]{AL94}).

\begin{framed}

\noindent
{\bf ALGORITHM}: Primality Test  \cite[Algorithm 4.4.1]{AL94}\\
{\bf Input:} An ideal $I$ in $R[x_1,\dots,x_m].$\\
{\bf Output:} TRUE if $I$ is a prime ideal, FALSE otherwise.\\
\noindent
Set $R_{m+1}=R$, and $R_i:=R[x_i,\dots,x_m]$ for $i=1,\dots,m$. \\Compute $J_i=I\cap R_i$ for $i=1,\dots, m+1$.\\
{\bf If} $J_{m+1}$ is not a prime ideal of $R$, {\bf then} result:= FALSE.\\
{\bf Else} result:= TRUE, $i:=m+1$.\\
\phantom{{\bf else} result:= TRUE,} {\bf While} $i>1$ {\bf and} result=TRUE {\bf do}\\
\phantom{{\bf else} result:= TRUE, {\bf Wh}} $R':=R_i/J_i$,\\
\phantom{{\bf else} result:= TRUE, {\bf Wh}} $J':=\text{ image of }J_{i-1}$ in $R'[x_{i-1}]$,\\
\phantom{{\bf else} result:= TRUE, {\bf Wh}} $k':=\text{ quotient field of }R'$.\\
\phantom{{\bf else} result:= TRUE, {\bf Wh}} Compute the polynomial $f$ such that $J'k'[x_{i-1}]=\langle f\rangle$.\\
\phantom{{\bf else} result:= TRUE, {\bf Wh}} {\bf If} $f$ is not zero and reducible over $k'$, {\bf then} result:= FALSE.\\
\phantom{{\bf else} result:= TRUE, {\bf Wh}} {\bf Else} compute $J'k'[x_{i-1}]\cap R'[x_{i-1}]$. \\
\phantom{{\bf else} result:= TRUE, {\bf Wh} {\bf Else} com} {\bf If} $J'k'[x_{i-1}]\cap R'[x_{i-1}]\neq J'$, {\bf then} result:=FALSE.\\
\phantom{{\bf else} result:= TRUE, {\bf Wh} {\bf Else} com} {\bf Else} $i:=i-1$.\\
{\bf Return} result.
\end{framed}

\bigskip

We will apply the algorithm to prove that, under the appropriate assumptions on ${\bm a}$, the ideal $I=\langle g_1,\dots,g_{m-1}\rangle$ is prime in $k[x_1,\dots,x_m]$. In our case, $R_{m+1}=k,$ and $R_i=k[x_i,\dots,x_m]$ for $i=1,\dots,m$. We now compute $J_i=I\cap R_i$. By Lemma \ref{GB} and \cite[Proposition 2.13]{St98} \[J_i=\big{\langle} G\cap R_i\big{\rangle},\quad i=1,\dots,m+1.\]
In particular,
\[J_i=\begin{cases}
\big{\langle} g_i,\dots, g_{m-1}\big{\rangle},\phantom{0}\quad \text{ if }i\in\{1,\dots, m-1\},\\
0, \phantom{\big{\langle} g_i,\dots, g_{m-1}\big{\rangle}}\quad \text{ if }i\in\{m, m+1\}.
\end{cases}\]

\begin{remark}\label{rem}
Note that $I\cap k[x_j]=0$ for every $j$. Indeed, if $I\cap k[x_j]\neq 0$ for some $j$, then, looking at the generators $g_i$ of $I$, we see that $I\cap k[x_j]\neq 0$ for {\em every} $j=1,\dots,m$. This contradicts $J_m=I\cap k[x_m]=0$.
\end{remark}
The algorithm requires studying, at every step, the (ir)reducibility of a polynomial $f$ over an appropriate field. We will need the following standard result.

\begin{lemma}\label{TeoK}
Let $F$ be an arbitrary field, $n\geq 1$, and $a\in F$. Then $x^n-a$ is irreducible over $F$ if and only if $a\not\in F^p$ for all primes $p$ dividing $n$ and $a\not\in -4F^4$ whenever $4\mid n$.
\end{lemma}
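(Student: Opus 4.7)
The plan is to prove both implications by induction on $n$, handling the reducibility direction by explicit factorizations and the irreducibility direction by a tower argument.

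For the reducibility direction, I would produce an explicit factorization under each hypothesis. If $a=b^p$ with $p\mid n$ prime and $n=pm$, then
\[x^n-a=(x^m)^p-b^p=(x^m-b)\sum_{j=0}^{p-1}b^j(x^m)^{p-1-j},\]
a nontrivial factorization since $m<n$. If $4\mid n$ and $-4a=b^4$, write $n=4m$ and apply the Sophie Germain identity
\[x^{4m}+\tfrac{b^4}{4}=\Bigl(x^{2m}+x^m b+\tfrac{b^2}{2}\Bigr)\Bigl(x^{2m}-x^m b+\tfrac{b^2}{2}\Bigr).\]

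For the irreducibility direction, I would proceed by strong induction on $n$. First I reduce to the prime-power case: if $n=n_1n_2$ with $\gcd(n_1,n_2)=1$ and both hypotheses hold for $n$, they hold for each $n_i$, so by the inductive hypothesis $x^{n_i}-a$ is irreducible over $F$; then for an $n$-th root $\alpha$ of $a$, the degrees $[F(\alpha^{n_1}):F]=n_2$ and $[F(\alpha^{n_2}):F]=n_1$ are coprime, forcing $[F(\alpha):F]=n$ and hence irreducibility of $x^n-a$. For the prime-power case $n=p^e$, I induct on $e$ by considering the tower $F\subset F(\gamma)\subset F(\alpha)$ with $\gamma=\alpha^{p^{e-1}}$, so $\gamma^p=a$. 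Since $a\notin F^p$ one gets $[F(\gamma):F]=p$; by the inductive hypothesis applied to $x^{p^{e-1}}-\gamma$ over $F(\gamma)$, irreducibility at the top of the tower reduces to verifying $\gamma\notin F(\gamma)^p$ and, when $p=2$ and $e\ge 3$, also $-4\gamma\notin F(\gamma)^4$. For odd $p$, the norm computation $N_{F(\gamma)/F}(\delta)^p=N_{F(\gamma)/F}(\gamma)=a$ immediately contradicts $a\notin F^p$.

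The main obstacle is the case $p=2$, where the $-4a\notin F^4$ hypothesis becomes essential. If $\gamma=\delta^2$ with $\delta=u+v\gamma\in F(\gamma)$ and $u,v\in F$, expanding $\delta^2=(u^2+v^2 a)+2uv\gamma=\gamma$ yields $2uv=1$ and $u^2+v^2 a=0$; eliminating $v$ gives $-4a=(2u)^4\in F^4$, contradicting the hypothesis. A similar but more intricate analysis, propagating both obstructions from $F$ to $F(\gamma)$, handles $-4\gamma\notin F(\gamma)^4$ for $e\ge 3$. The delicate bookkeeping for $p=2$ — together with the need to treat characteristic $2$ separately, where the factor of $4$ degenerates and the statement must be interpreted with care — is where I expect the argument to require the most work.
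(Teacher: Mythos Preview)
The paper does not prove this lemma at all: its entire proof is the single line ``See \cite[Theorem 2.6 on page 425]{K89}'', deferring to Karpilovsky's \emph{Topics in Field Theory}. So your proposal is not a comparison target in the usual sense --- you are supplying a self-contained proof where the paper merely cites a standard reference.

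That said, your sketch is essentially the standard proof (and almost certainly the one in Karpilovsky). The reducibility direction via explicit factorizations is correct; the tower/norm argument for odd $p$ is correct; and your $p=2$ computation showing that $\gamma=\delta^2$ in $F(\gamma)$ forces $a\in -4F^4$ is right. The ``more intricate'' step you flag, showing $\gamma\notin -4F(\gamma)^4$ when $e\ge 3$, is in fact no harder: if $\gamma=-4\eta^4$, set $\mu=2\eta^2$ so that $\mu^2=-\gamma$, write $\mu=s+t\gamma$, and the same comparison of coefficients gives $2st=-1$ and $s^2+t^2a=0$, hence $a=-4s^4\in -4F^4$. Your remark about characteristic $2$ is also apt: there $-4F^4=\{0\}$ and the extra condition degenerates to $a\neq 0$, which is already implied by $a\notin F^2$. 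So nothing substantial is missing from your outline; for the purposes of the paper, however, citing the reference is the expected move.
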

 \begin{proof}
 See \cite[Theorem 2.6 on page 425]{K89}.
 \end{proof}

\begin{proof}[Proof of Proposition \ref{irred}]
If the hypothesis on ${\bm a}$ is not satisfied, then $a_i+a_{i+1}+\dots+a_j=0$ for some $1\leq i\leq j\leq m-1$. Hence, in the system \eqref{system} defining our curve, we can replace the equation $g_i=0$ with $x_i^d-x_{j+1}^d=0$. Since the characteristic $q$ of the field $k$ does not divide $d$, the polynomial $x_i^d-x_j^d$ is the product of $d$ distinct irreducible factors. In particular, the curve $\mathcal{C}(d,{\bm a},q)$ is reducible.

Conversely, assume that the sums $A_i=\sum^{m-1}_{k= i}a_k$ are all distinct and non-zero. We prove that the ideal $\langle g_1,\dots,g_{m-1}\rangle$ is prime in $k[x_1,\dots,x_m]$. We argue by strong induction on $m$.

First, consider the case $m=2$. Following the algorithm, we obtain $f=0$ for $i=3$ . At the second (and last) iteration, for $i=2$, we have $f=x_1^d-x_2^d-a_1$. We claim that $x_1^d-x_2^d-a_1$ is irreducible over the field $k(x_2)$. Let $p$ be a prime, with $p \mid d$. Assume by contradiction that $x_2^d+a_1=\alpha^p$ for some $\alpha\in k(x_2)$. Let $\alpha=A/B$ for coprime polynomials $A,B\in k[x_2]$. Hence,
\begin{equation}\label{1}
\big(B(x_2)\big)^p(x_2^d+a_1)=\big(A(x_2)\big)^p.
\end{equation}
Let $\theta\in k$ be such that $\theta^{d}+a_1=0$. Recall that $a_1\neq 0$ by assumption. Hence, $\theta\neq 0$. It follows from \eqref{1} that $A(\theta)=0$. Taking a derivative on both sides of \eqref{1}, we obtain
\begin{equation}\label{2}
pB'(x_2)\big(B(x_2)\big)^{p-1}(x_2^d+a_1)+\big(B(x_2)\big)^p dx_2^{d-1}=pA'(x_2)\big(A(x_2)\big)^{p-1}.
\end{equation}
Evaluating \eqref{2} at $\theta$ yields
\begin{equation}\label{3}
\big(B(\theta)\big)^p d\theta^{d-1}=0.
\end{equation}
Since $\theta\neq 0$ and $d$ is not a multiple of the characteristic of $k$, \eqref{3} implies $B(\theta)=0$. We have now reached a contradiction, since $A$ and $B$ were taken to be coprime. Hence, $x_2^d+a_1\not\in \big(k(x_2)\big)^p$. By the same argument, $x_2^d+a_1\not\in -4\big(k(x_2)\big)^4$ if $4\mid d$. Lemma \ref{TeoK} then implies that  $x_1^d-x_2^d-a_1$ is irreducible over $k(x_2)$, and the proof of the case $m=2$ is complete.

Let now $m\geq 3$, and assume that $\langle g_1,\dots,g_{j-1}\rangle$ is a prime ideal in $k[x_1,\dots,x_{j}]$ for every $j\in\{1,\dots,m-1\}$. We want to prove that $\langle g_1,\dots,g_{m-1}\rangle$ is a prime ideal in $k[x_1,\dots,x_{m}]$. Running the primality algorithm, we see that the result follows if, for every $i=2,\dots,m$, the polynomial $f=x_{i-1}^d-x_i^d-a_{i-1}$ is irreducible over $k'$, where $k'$ is the quotient field of 
\begin{equation}\label{ultimata}
R'=\frac{k[x_i,\dots,x_m]}{\langle x_i^d-x_{i+1}^d-a_i,\dots,x_{m-1}^d-x_m^d-a_{m-1}\rangle}.
\end{equation}
Note that $R'$ is an integral domain by the inductive hypothesis.
To prove the irreducibility of $f$, we will use Lemma \ref{TeoK}. Let $p$ be a prime, with $p\mid d$. Assume by contradiction that $x_i^d+a_{i-1}=\alpha^p$, for some $\alpha\in k'$. Let $\bar{x}_j$ denote the equivalence class of $x_j$ in $k'$. Since the elements $\bar{x}_{i},\dots, \bar{x}_{m-1}$ are algebraic over the field $k(x_m)$, we have that $k'=k(x_m)[\bar{x}_{i},\dots, \bar{x}_{m-1}]$. We can thus find a representative of $\alpha\in k'$ which is a polynomial in the variables $x_{i},\dots, x_{m-1}$ with coefficients in $k(x_m)$. After clearing denominators, we obtain a representation of $\alpha$ as a quotient $A(x_i,\dots, x_m)/B(x_m)$, where $A\in k[x_i,\dots, x_m]$ and $B\in k[x_m]$. The equality $x_i^d+a_{i-1}=\alpha^p$ in $k'$ yields %the following identity in $k[x_i,\dots,x_m]$:
\begin{equation}\label{rri}
\big(B(x_m)\big)^p(x_i^d+a_{i-1})-\big(A(x_i,\dots, x_{m})\big)^p\in \langle x_i^d-x_{i+1}^d-a_i,\dots,x_{m-1}^d-x_m^d-a_{m-1}\rangle.
\end{equation}
We can assume without loss of generality that $A$ is of degree at most $d-1$ in the variables $x_{i+1},\dots, x_m$. Hence,
\begin{equation}\label{sos}
\big(B(x_m)\big)^p(x_i^d+a_{i-1})-\Bigg(\sum_{1\leq j_{i+1},\dots,j_{m}\leq d} c_{j_{i+1}\dots j_{m}}(x_i)\,x^{j_{i+1}}_{i+1}\cdots x_m^{j_m}\Bigg)^p=\sum_{j=i}^{m-1} f_j\cdot (x_j^d-x_{j+1}^d-a_j)
\end{equation}
for some polynomials $f_j\in k[x_i,\dots, x_m]$ and $ c_{j_{i+1}\dots j_{m}}\in k[x_i]$. Now let $(\theta_i,\dots, \theta_m)\in k^m$ satisfying
\begin{equation}\label{theta}
\begin{cases}
\theta_i^d=-a_{i-1},\\
\theta_{i+1}^d=-a_{i-1}-a_i,\\
\phantom{\theta}\vdots\\
\theta^d_{m-1}=-a_{i-1}-a_i-\dots -a_{m-2},\\
\theta^d_{m}=-a_{i-1}-a_i-\dots -a_{m-2}-a_{m-1}.\\
\end{cases}
\end{equation}
By our hypotheses on ${\bm a}$, the sums of the $a_j$ appearing in \eqref{theta} are never equal to zero. Moreover, since $d$ is not a multiple of the characteristic of the base field $k$, there are $d$ distinct values for every $\theta_j$. Substituting $\theta_i,\dots,\theta_{m}\in k$ satisfying \eqref{theta} in \eqref{sos} gives $A(\theta_i,\dots,\theta_m)=0$. Note that $A(\theta_i,\dots,\theta_{m-1}, x_m)$ is a polynomial of degree at most $d-1$ in $x_m$ with $d$ distinct roots in $k$. Hence each of its coefficients must be equal to zero. Fixing $\theta_i,\dots,\theta_{m-2}$ in each of those coefficients, we obtain polynomials of degree at most $d-1$ in $x_{m-1}$ with $d$ roots in $k$. Inductively, we conclude that every polynomial $c_{j_{i+1}\dots j_{m}}(x_i)$ appearing in \eqref{sos} must vanish for all $\theta_i$ such that $\theta_i^d=-a_{i-1}$. It follows that $A(x_i,\dots, x_{m})=(x_i^d+a_{i-1})V(x_i,\dots, x_m)$ for some $V\in k[x_i,\dots, x_m]$. From \eqref{rri} we thus get
\begin{equation*}
(x_i^d+a_{i-1})\Bigg(\big(B(x_m)\big)^p-\big(V(x_i,\dots, x_{m})\big)^p(x_i^d+a_{i-1})^{p-1}\Bigg)\in \langle x_i^d-x_{i+1}^d-a_i,\dots,x_{m-1}^d-x_m^d-a_{m-1}\rangle.
\end{equation*}
The ideal $\langle x_i^d-x_{i+1}^d-a_i,\dots,x_{m-1}^d-x_m^d-a_{m-1}\rangle$ is prime in $k[x_i,\dots, x_m]$ by the inductive hypothesis. Remark \ref{rem} then implies
\begin{equation}\label{la}
\big(B(x_m)\big)^p-\big(V(x_i,\dots, x_{m})\big)^p(x_i^d+a_{i-1})^{p-1}=\sum_{j=i}^{m-1} h_j\cdot (x_j^d-x_{j+1}^d-a_j)
\end{equation}
for some polynomials $h_j\in k[x_i,\dots, x_m]$. Substituting in \eqref{la} elements $\theta_1,\dots, \theta_m\in k$ that satisfy \eqref{theta}, we obtain $B(\theta_m)=0$ for every $\theta_m$ such that $\theta_m^d=-a_{i-1}-\dots - a_{m-1}$. Therefore \[B(x_m)=W(x_m)(x_m^d+a_{i-1}+\dots + a_{m-1})\]
for some polynomial $W\in k[x_m]$. Note that $x_m^d+a_{i-1}+\dots + a_{m-1}=x_i^d+a_{i-1}$ in $k'$, and therefore the elements $A$ and $B$ representing $\alpha$ have a common factor in $R'$. We can thus remove this common factor and write $\alpha=\widetilde{A}(x_i,\dots, x_m)/\widetilde{B}(x_m)$, where $\widetilde{A}\in k[x_i,\dots, x_m]$ and $\widetilde{B}\in k[x_m]$ are polynomials whose degrees satisfy $\deg \widetilde{A}=\deg A -d$, $\deg \widetilde{B}=\deg B -d$. Repeating the reasoning above with $A$ and $B$ replaced by $\widetilde{A}$ and $\widetilde{B}$ respectively, we deduce that $\widetilde{A}$ and $\widetilde{B}$ again share a common factor in $R'$. We remove that common factor, and repeat the same argument once more. After finitely many steps, we reach a contradiction.
We have thus proved that $x_i^d+a_{i-1}\not\in (k')^p$. With the same argument, one can show that $x_i^d+a_{i-1}\not\in -4(k')^4$ if $4\mid d$. Lemma \ref{TeoK} then implies that $f=x_{i-1}^d-x_i^d-a_{i-1}$ is irreducible over $k'$, as wanted.
\end{proof}

\section{Point count on the curve}\label{SectionCount}
Let $b,d, q$ be positive integers, with $d\geq 2$ and $(b,q)=1$. Let ${\bm a}=(a_1,\dots, a_{m-1})\in {\bm Z}_q^{m-1}$. Define
\begin{equation}\label{defnu}
\nu(d,{\bm a},q)=\#\big{\{}{\bm x}\in{\bm Z}_q^{m}\,\colon\, bx_i^d-bx_{i+1}^d=a_i\, (\text{mod}\, q), 1\leq i\leq m-1\big{\}}.
\end{equation}

\begin{remark}
When $q$ is a prime, the quantity $\nu(d,{\bm a},q)$ corresponds to the number of points in ${\bm Z}_q^{m}$ of the curve $\mathcal{C}(d,{\bm a}\bar{b},q)$, following the notation of Section \ref{SectionCurve}. Here $\bar{b}\in{\bm Z}_q$ is such that $b\bar{b}=1$, and ${\bm a}\bar{b}$ is the element $(a_1\bar{b},\dots, a_{m-1}\bar{b})\in {\bm Z}_q^{m-1}$ .
\end{remark}

\begin{notation}
From now on we use the notation $\mathcal{C}(d,{\bm a}\bar{b},q)$ even when $q$ is not prime (but still $(b,q)=1$) to indicate the curve consisting of the points $(x_1,\dots, x_m)$ satisfying the equations $bx_i^d-bx_{i+1}^d=a_i\, (\text{mod}\, q)$ for $1\leq i\leq m-1$.
\end{notation}

Let $q=\prod_{j=1}^r p_j^{e_j}$ be a prime factorization of $q$. By the Chinese Remainder Theorem,
\begin{equation}\label{CRT}
\nu(d,{\bm a},q)=\prod_{j=1}^r \nu(d,{\bm a},p_j^{e_j}).
\end{equation}
For every $j\in\{1,\dots, r\}$, we define the number $A(d,{\bm a},p_j^{e_j})$ by
\begin{equation*}
A(d,{\bm a},p_j^{e_j})=\nu(d,{\bm a}, p_j^{e_j})-p_j^{e_j}.
\end{equation*}
Equation \eqref{CRT} yields
\begin{equation}\label{rew}
\nu(d,{\bm a}, q)=\prod_{j=1}^r(p_j^{e_j}+A(d,{\bm a}, p_j^{e_j}))=q\prod_{j=1}^r \bigg(1+\frac{A(d,{\bm a},p_j^{e_j})}{p_j^{e_j}}\bigg)=q\sum_{\mathcal{S}\subseteq\{1,\dots,r\}}\frac{A(d,{\bm a}, c_{\mathcal{S}})}{c_{\mathcal{S}}},
\end{equation}
where for each non-empty subset $\mathcal{S}\subseteq\{1,\dots,r\}$ we let
\begin{equation*}
c_{\mathcal{S}}=\prod_{j\in\mathcal{S}} p_j^{e_j},\quad\quad\quad A(d,{\bm a}, c_{\mathcal{S}})=\prod_{j\in\mathcal{S}} A(d,{\bm a}, p_j^{e_j}),
\end{equation*}
and $c_{\mathcal{S}}=1$ for $\mathcal{S}=\emptyset$.

In the next lemma we will establish a bound for $|A(d,{\bm a},c_{\mathcal{S}})|$. Following \cite{KR99}, for a prime $p$ and ${\bm a}=(a_1,\dots, a_{m-1})$, we denote by $r_{\textit{eff}}({\bm a},p)$ the number of distinct components of an element ${\bm y}=(y_1,\dots, y_m)$ satisfying
\begin{equation}\label{5.8}
y_i-y_{i+1}=a_i\,\,(\text{mod } p),\quad\quad 1\leq i\leq m-1.
\end{equation}
This number is well defined, independent of the particular solution. For $1\leq i<j\leq m$, let
\begin{equation*}
\sigma_{ij}({\bm a})=\sum_{k=i}^{j-1}a_k,
\end{equation*}
 so that $\sigma_{i,i+1}({\bm a})=a_i, \sigma_{ij}=\sum_{k=i}^{j-1}\sigma_{k,k+1}$. Let $D({\bm a})=\prod_{1\leq i\leq j\leq m}\sigma_{ij}({\bm a})$.  A solution of the system \eqref{5.8} has distinct components (that is, $r_{\textit{eff}}({\bm a},p)=m$) if and only if $p$ does not divide $D({\bm a})$, since $y_i-y_j=\sum_{k=i}^{j-1}(y_k-y_{k+1})=\sum_{k=i}^{j-1}a_k=\sigma_{ij}({\bm a})$.

\begin{lemma}\label{estimateonA} Let $\tilde{q}$ be the squarefree part of $q$. Assume that there exists $\delta>0$ such that $\tilde{q}\geq q^{1-\delta}$. Then, for every subset $\mathcal{S}\subset\{1,\dots, r\}$, we have
\begin{equation*}|A(d,{\bm a},c_{\mathcal{S}})|\ll_{m,d,\varepsilon} c_{\mathcal{S}}^{1/2+\varepsilon}(c_{\mathcal{S}}, D({\bm a}))^{1/2}q^{2\delta m}.
\end{equation*}
\end{lemma}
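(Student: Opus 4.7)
My plan is to exploit the multiplicativity of $A(d,{\bm a},c_\mathcal{S})=\prod_{j\in\mathcal{S}}A(d,{\bm a},p_j^{e_j})$, which is immediate from \eqref{CRT}, and bound each prime-power factor separately. I would split $\mathcal{S}=\mathcal{S}_1\sqcup \mathcal{S}_2$, where $\mathcal{S}_1=\{j\in\mathcal{S}:e_j=1\}$ indexes the squarefree part of $c_\mathcal{S}$ and $\mathcal{S}_2=\{j\in\mathcal{S}:e_j\geq 2\}$ indexes its non-squarefree part. The hypothesis $\tilde{q}\geq q^{1-\delta}$ is equivalent to $\prod_{e_j\geq 2}p_j^{e_j-1}\leq q^{\delta}$, and this is exactly what will tame the contribution of $\mathcal{S}_2$.

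The heart of the argument is the bound for $j\in\mathcal{S}_1$. Observe first that the conditions in Proposition \ref{irred} on the partial sums $A_i$ translate exactly to $p_j\nmid D({\bm a})$, since $A_i=\sigma_{i,m}({\bm a})$ and $A_i-A_k=\sigma_{i,k}({\bm a})$ for $i<k$, and all these $\sigma_{ij}$ appear in $D({\bm a})$. When $p_j\nmid d$ and $p_j\nmid D({\bm a})$, Proposition \ref{irred} guarantees that $\mathcal{C}(d,{\bm a}\bar{b},p_j)$ is absolutely irreducible in $\overline{{\bm Z}}_{p_j}^{m}$, with degree bounded solely in terms of $d$ and $m$. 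The Lang--Weil bound for absolutely irreducible affine varieties (or passing to a smooth projective completion of bounded genus and applying the classical Weil theorem) then yields
\[|A(d,{\bm a},p_j)|=|\nu(d,{\bm a},p_j)-p_j|\ll_{d,m}p_j^{1/2}.\]
If instead $p_j\mid D({\bm a})\cdot d$, I would fall back on a trivial bound: fixing $x_m\in{\bm Z}_{p_j}$ and noting that each $x_{i}$ must be a $d$-th root of $x_{i+1}^d+a_i\bar b\pmod{p_j}$ (at most $d$ choices) yields $\nu(d,{\bm a},p_j)\leq d^{m-1}p_j$, so $|A(d,{\bm a},p_j)|\ll_{d,m}p_j$. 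When $p_j\mid D({\bm a})$ one has $(p_j,D({\bm a}))^{1/2}=p_j^{1/2}$ and this absorbs the extra factor; when $p_j\mid d$ one has $p_j\leq d$ and the extra factor is absorbed into the implicit constant. In every case we obtain
\[|A(d,{\bm a},p_j)|\ll_{d,m}p_j^{1/2}(p_j,D({\bm a}))^{1/2}.\]

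For $j\in\mathcal{S}_2$ a similar but cruder Hensel-lifting count gives the trivial bound $|A(d,{\bm a},p_j^{e_j})|\leq \nu(d,{\bm a},p_j^{e_j})+p_j^{e_j}\ll_{d,m}p_j^{e_j}$. Multiplying the per-prime estimates,
\[|A(d,{\bm a},c_\mathcal{S})|\leq K_{d,m}^{|\mathcal{S}|}\prod_{j\in\mathcal{S}_1}p_j^{1/2}(p_j,D({\bm a}))^{1/2}\prod_{j\in\mathcal{S}_2}p_j^{e_j}.\]
Three observations then finish the proof. First, the accumulated constant $K_{d,m}^{|\mathcal{S}|}$ is $\ll_{d,m,\varepsilon}c_\mathcal{S}^{\varepsilon}$ by the divisor bound $\omega(n)\ll \log n/\log\log n$. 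Second, the squarefree factors regroup as $\prod_{\mathcal{S}_1}p_j^{1/2}=c_\mathcal{S}^{1/2}\big/\big(\prod_{\mathcal{S}_2}p_j^{e_j}\big)^{1/2}$ and $\prod_{\mathcal{S}_1}(p_j,D({\bm a}))\leq (c_\mathcal{S},D({\bm a}))$ because distinct primes dividing $c_\mathcal{S}$ are coprime. Third, and this is where the squarefree hypothesis enters, $p_j^{e_j}\leq (p_j^{e_j-1})^2$ whenever $e_j\geq 2$, so
\[\prod_{j\in\mathcal{S}_2}p_j^{e_j}\leq \Bigg(\prod_{j\in\mathcal{S}_2}p_j^{e_j-1}\Bigg)^2\leq (q/\tilde{q})^2\leq q^{2\delta},\]
and therefore $\big(\prod_{\mathcal{S}_2}p_j^{e_j}\big)^{1/2}\leq q^{\delta}\leq q^{2\delta m}$. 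Combining these pieces gives the desired estimate $|A(d,{\bm a},c_\mathcal{S})|\ll_{d,m,\varepsilon}c_\mathcal{S}^{1/2+\varepsilon}(c_\mathcal{S},D({\bm a}))^{1/2}q^{2\delta m}$.

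The main obstacle is the squarefree case: one must convert the algebraic irreducibility output of Proposition \ref{irred} into an effective point count with implicit constant depending only on $d$ and $m$. This is where Lang--Weil (or explicit control on the genus of a smooth projective completion of $\mathcal{C}(d,{\bm a}\bar b,p_j)$) is needed. Once that Weil-type estimate is in hand, the remainder of the argument is routine bookkeeping around the divisor bound and the squarefree hypothesis $\tilde{q}\geq q^{1-\delta}$.
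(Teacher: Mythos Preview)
Your overall approach matches the paper's: split $\mathcal{S}=\mathcal{S}_1\sqcup\mathcal{S}_2$ according to whether $e_j=1$ or $e_j\ge 2$, apply a Weil-type point count on $\mathcal{S}_1$, use a trivial bound on $\mathcal{S}_2$, and absorb the accumulated constant $K^{|\mathcal{S}|}$ via the divisor bound. Where you invoke Proposition~\ref{irred} plus Lang--Weil and then treat $p_j\mid D({\bm a})$ separately by the crude count $\nu\le d^{m-1}p_j$, the paper instead quotes the single estimate \eqref{estimate1} from \cite{KR99}, which already packages the reducible case through the factor $d^{m-r_{\textit{eff}}}$; the two treatments yield the identical per-prime bound $|A(d,{\bm a},p_j)|\ll_{d,m}p_j^{1/2}(p_j,D({\bm a}))^{1/2}$.

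There is, however, one genuine gap. Your assertion that for $j\in\mathcal{S}_2$ ``a cruder Hensel-lifting count gives $\nu(d,{\bm a},p_j^{e_j})\ll_{d,m}p_j^{e_j}$'' is false in general: the curve may be badly singular modulo $p_j$, and then solutions proliferate under lifting rather than being controlled by it. For a concrete example take $m=3$, $d=2$, $p$ odd, and ${\bm a}\equiv{\bm 0}\pmod{p^e}$; then every triple $(x_1,x_2,x_3)$ with all $x_i\in p^{\lceil e/2\rceil}(\mathbb{Z}/p^e\mathbb{Z})$ satisfies $x_1^2\equiv x_2^2\equiv x_3^2\equiv 0$, so $\nu\ge p^{3\lfloor e/2\rfloor}$, which exceeds any fixed multiple of $p^e$ once $e$ is large. (When $p_j\mid d$ the failure is even more drastic.) Fortunately this step is also unnecessary: the honestly trivial bound $|A(d,{\bm a},p_j^{e_j})|\le p_j^{e_jm}$, which is exactly what the paper uses in \eqref{estimate2}, already yields
\[
\prod_{j\in\mathcal{S}_2}|A(d,{\bm a},p_j^{e_j})|\le\Bigl(\prod_{e_j\ge 2}p_j^{e_j}\Bigr)^{m}\le (q/\tilde q)^{2m}\le q^{2\delta m},
\]
and with this substitution the rest of your argument goes through verbatim to give the stated bound.
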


\begin{proof}
First we consider the case of $c=p$ prime with $p\nmid d$. Applying the Riemann Hypothesis for curves over finite fields (see \cite{W48} and \cite[Theorem 5A and Corollary 5B]{Sc76}) as in \cite[Proposition 4]{KR99}, we obtain
 \begin{equation}\label{estimate1}
 \nu(d,{\bm a},p)=d^{m-r_{\textit{eff}}({\bm a},p)}(p+B({\bm a},p)),\quad\text{ with }|B({\bm a},p)|\ll_m p^{1/2}.
 \end{equation}
 We use estimate \eqref{estimate1} for the primes $p_j$ such that $e_j=1$ and $p_j\nmid d$. For the other primes, we use the trivial estimate
 \begin{equation}\label{estimate2}
 |A(d,{\bm a}, p_j^{e_j})|\leq p_j^{e_jm}.
 \end{equation}
 Note that since $\tilde{q}\geq q^{1-\delta}$ it follows that
\begin{equation}\label{til}
\prod_{\substack{j\in \mathcal{S}\\ e_j\geq 2}} p_j^{e_j}\leq q^{2\delta m}.
 \end{equation}
Multiplying the inequalities in \eqref{estimate1} and \eqref{estimate2}, using \eqref{til}, and recalling that $\#\mathcal{S}\ll_{m,d,\varepsilon}c_{\mathcal{S}}^{\varepsilon}$, we obtain
\begin{equation*}
\begin{split}
|A(d,{\bm a}, c_{\mathcal{S}})|&=\prod_{\substack{j\in \mathcal{S}\\ e_j=1,\, p_j\nmid d}}|A(d,{\bm a}, p_j)|\prod_{\substack{j\in \mathcal{S}\\ e_j\geq 2 \text{ or } p_j\mid d}}|A(d,{\bm a}, p_j^{e_j})|\\ &\leq \prod_{\substack{j\in \mathcal{S}\\ e_j=1,\, p_j\nmid d}} C_{m, d} \,p_j^{1/2}(p_j, D({\bm a}))^{1/2}\prod_{\substack{j\in \mathcal{S}\\ e_j\geq 2}} p_j^{e_jm}\prod_{\substack{j\in \mathcal{S}\\e_j=1,\, p_j\mid d}} p_j^{e_jm}.\\
 &\leq C_{m,d,\varepsilon} \,c_{\mathcal{S}}^{1/2+\varepsilon}(c_{\mathcal{S}}, D({\bm a}))^{1/2}q^{2\delta m},\\
\end{split}
\end{equation*}
which concludes the proof of the lemma.
\end{proof}

\begin{lemma}\label{zerosum}
For every $c>1$,% we have
\begin{equation*}
\sum_{{\bm a}\,(\moda  c)}A(d,{\bm a}, c)=0.
\end{equation*}
\end{lemma}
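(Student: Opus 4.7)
The plan is to reduce to the case where $c$ is a prime power via the Chinese Remainder Theorem, and then verify the identity by a short double-counting argument.

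First, write $c=\prod_{j}p_j^{e_j}$ as its prime power factorization. The multiplicative definition gives $A(d,{\bm a},c)=\prod_{j}A(d,{\bm a},p_j^{e_j})$, and inspection of \eqref{defnu} shows that each factor $A(d,{\bm a},p_j^{e_j})$ depends on ${\bm a}$ only through its reduction modulo $p_j^{e_j}$. Since the Chinese Remainder Theorem identifies ${\bm a}\in{\bm Z}_c^{m-1}$ bijectively with tuples $({\bm a}_j)_j$ where ${\bm a}_j\in {\bm Z}_{p_j^{e_j}}^{m-1}$, the sum factors as
\[
\sum_{{\bm a}\,(\moda c)}A(d,{\bm a},c)\;=\;\prod_j \sum_{{\bm a}_j\,(\moda p_j^{e_j})}A(d,{\bm a}_j,p_j^{e_j}).
\]
Since $c>1$ has at least one prime factor, it suffices to show that each prime-power sum on the right vanishes.

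For $c=p^e$, substituting $A(d,{\bm a},p^e)=\nu(d,{\bm a},p^e)-p^e$ and using that $|{\bm Z}_{p^e}^{m-1}|=(p^e)^{m-1}$ reduces the claim to
\[
\sum_{{\bm a}\,(\moda p^e)}\nu(d,{\bm a},p^e)=(p^e)^{m}.
\]
I would establish this by counting pairs $({\bm a},{\bm x})\in {\bm Z}_{p^e}^{m-1}\times {\bm Z}_{p^e}^m$ satisfying $bx_i^d-bx_{i+1}^d=a_i\,(\moda p^e)$ for $1\leq i\leq m-1$ in two ways. Summing over ${\bm a}$ first reproduces the left-hand side, by \eqref{defnu}. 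Summing over ${\bm x}$ first: for each fixed ${\bm x}$, there is exactly one ${\bm a}$ satisfying all $m-1$ congruences, namely $a_i=b(x_i^d-x_{i+1}^d)\,(\moda p^e)$. Hence the pair count equals $|{\bm Z}_{p^e}^m|=(p^e)^m$, and the two terms cancel.

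There is essentially no substantive obstacle here; the whole argument is a single clean double-counting once one observes that the defining system of congruences determines ${\bm a}$ uniquely from ${\bm x}$ rather than the other way around. In particular, the arithmetic hypotheses on $d$ and the coprimality assumption $(b,q)=1$ play no role in this identity.
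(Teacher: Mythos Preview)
Your proof is correct and follows essentially the same approach as the paper's: reduce to the prime(-power) factors by multiplicativity, then verify the vanishing via the double-counting identity $\sum_{{\bm a}}\nu(d,{\bm a},p^e)=(p^e)^m$. The only cosmetic difference is that you spell out the prime-power case explicitly, whereas the paper writes the prime case and then invokes multiplicativity.
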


\begin{proof}
For every prime $p$,% we have
\begin{equation}\label{somma1}
\sum_{{\bm a}\,(\moda  p)}\nu(d,{\bm a},p)=\sum_{{\bm x}\,(\moda  p)}\,\sum_{\substack{{\bm a}\,(\moda  p)\\{\bm x}\in \mathcal{C}(d,{\bm a}\bar{b},p)}} 1 = p^m.
\end{equation}
By definition of $A(d,{\bm a},p)$,% we have
\begin{equation}\label{somma2}
\sum_{{\bm a}\,(\moda  p)}\nu(d,{\bm a},p)=\sum_{{\bm a}\,(\moda  p)}(p+A(d,{\bm a},p))=p^m+\sum_{{\bm a}\,(\moda  p)} A(d,{\bm a},p).
\end{equation}
Combining \eqref{somma1} and \eqref{somma2} we obtain
\begin{equation*}
\sum_{{\bm a}\,(\moda  p)}A(d,{\bm a}, p)=0.
\end{equation*}
The result now follows from the multiplicativity of the above sums.
\end{proof}

%\section{Besicovitch}

\section{A convergence theorem}
The goal of this section is proving the following result.
\begin{theorem}\label{T2}
Let $d\geq 2$. Fix $m\geq 2$ and $0<\delta_0<\frac{1}{4m}$. There exists $\delta=\delta(m,\delta_0)>0$ such that for every $f\in C^{\infty}_c({\bf R}^{m-1})$ one has
\begin{equation*}
R^{(m)}\bigg(N,d,\frac{b}{q},f\bigg)\rightarrow\int_{{\bf R}^{m-1}} f({\bf x})\, d{\bf x}
\end{equation*}
uniformly for $(b,q)=1, q^{1-1/(2m)+\delta_0}\leq N\leq q^{1-\delta_0}$, as $q\rightarrow\infty$ such that $\tilde{q}\geq q^{1-\delta}$.
\end{theorem}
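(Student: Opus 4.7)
The plan is to reduce $R^{(m)}(N,d,b/q,f)$ to a weighted sum over the ${\bm Z}_q$-points of the curves $\mathcal C(d,{\bm a}\bar b,q)$ studied in Section \ref{SectionCurve}, and then to analyze that sum using the decomposition \eqref{rew} together with Lemmas \ref{estimateonA} and \ref{zerosum}. Expanding $F_N({\bm y})=\sum_{\mathbf l}f(N\mathbf l+N{\bm y})$ and setting $a_j=ql_j+b(n_j^d-n_{j+1}^d)$ converts the correlation sum into
\[
R^{(m)}\!\left(N,d,\tfrac{b}{q},f\right)=\frac{1}{N}\sum_{{\bm a}\in{\bf Z}^{m-1}} f\!\left(\tfrac{N{\bm a}}{q}\right) M({\bm a}),
\]
where $M({\bm a})$ counts distinct tuples $(n_1,\dots,n_m)\in[1,N]^m$ with $b(n_j^d-n_{j+1}^d)\equiv a_j\pmod q$, and the support of $f$ restricts ${\bm a}$ to $|a_j|\ll q/N$. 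Since $N\le q^{1-\delta_0}<q$, the $n_j$ embed injectively into ${\bm Z}_q$, so $M({\bm a})$ is essentially the number of ${\bm Z}_q$-points of $\mathcal C(d,{\bm a}\bar b,q)$ lying in the box $[1,N]^m$.

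To extract this box count, I would detect membership in $[1,N]$ via additive characters mod $q$: with $S_N(t)=\sum_{n=1}^N e_q(-tn)$,
\[
M({\bm a})=\frac{1}{q^m}\sum_{{\bm t}\in{\bm Z}_q^m}\prod_{j=1}^{m} S_N(t_j)\sum_{\mathbf x\in\mathcal C(d,{\bm a}\bar b,q)}e_q({\bm t}\cdot\mathbf x),
\]
modulo a distinctness correction (handled below). The ${\bm t}=0$ term is the expected main term $\frac{N^m}{q^m}\nu(d,{\bm a},q)$. For ${\bm t}\ne 0$, the inner character sum on $\mathcal C$ is $\ll_{m,d}q^{1/2}$ by the Riemann Hypothesis for curves over finite fields, using Proposition \ref{irred} to guarantee absolute irreducibility for generic ${\bm a}$ and summing componentwise for non-generic ${\bm a}$; composite $q$ is handled via CRT. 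Combined with $\sum_t|S_N(t)|\ll q\log q$, the ${\bm t}\ne 0$ contribution to a single $M({\bm a})$ is bounded by $q^{1/2+\varepsilon}$, so the total Weil error in $R^{(m)}$ is $\ll q^{m-1/2+\varepsilon}/N^m$, which is $o(1)$ exactly when $N\ge q^{1-1/(2m)+\delta_0}$, explaining the lower bound in the hypothesis.

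The remaining main term $\frac{N^{m-1}}{q^{m-1}}\sum_{{\bm a}}f(N{\bm a}/q)\cdot\nu(d,{\bm a},q)/q$ is then analyzed through \eqref{rew}. The $\mathcal S=\emptyset$ summand, $\frac{N^{m-1}}{q^{m-1}}\sum_{{\bm a}}f(N{\bm a}/q)$, is a Riemann sum of mesh $N/q\to 0$ and converges to $\int_{{\bf R}^{m-1}}f$. For every $\mathcal S\ne\emptyset$ the function ${\bm a}\mapsto A(d,{\bm a},c_{\mathcal S})$ is periodic modulo $c_{\mathcal S}$; grouping ${\bm a}$ by its residue class ${\bm a}'\pmod{c_{\mathcal S}}$ and applying Poisson summation to $\sum_{{\bm a}\equiv{\bm a}'\,(c_{\mathcal S})}f(N{\bm a}/q)$ produces an expansion whose zero-frequency term is proportional to $\sum_{{\bm a}'}A(d,{\bm a}',c_{\mathcal S})=0$ by Lemma \ref{zerosum}. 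The remaining Fourier modes, controlled by the bound $|A(d,{\bm a},c_{\mathcal S})|\ll c_{\mathcal S}^{1/2+\varepsilon}(c_{\mathcal S},D({\bm a}))^{1/2}q^{2\delta m}$ of Lemma \ref{estimateonA} together with Schwartz decay of $\hat f$, yield after summing over the $2^r\ll q^\varepsilon$ subsets $\mathcal S$ a total that is $o(1)$ once $\delta=\delta(m,\delta_0)$ is chosen small enough.

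The main obstacle is balancing three competing losses. The Weil cancellation on the curve forces $N$ to be at least of size $q^{1-1/(2m)}$; the factor $(c_{\mathcal S},D({\bm a}))^{1/2}$ appearing in Lemma \ref{estimateonA} inflates the contribution of those ${\bm a}$ for which the curve is reducible (equivalently, ${\bm a}$ lying on the hyperplane arrangement $A_i=A_j$ or $A_i=0$ identified in Proposition \ref{irred}) and must be averaged away by exploiting the linear structure of the partial sums in $D({\bm a})$; and the non-squarefree part of $q$ forces us to absorb a $q^{2\delta m}$ defect from Lemma \ref{estimateonA}, which only closes under the hypothesis $\tilde q\ge q^{1-\delta}$. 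A minor separate step handles distinctness of the $n_j$: coinciding indices force some $a_j=0$ and contribute $O(N^{m-1})$ tuples per coincidence, absorbed into the Weil error in the same regime.
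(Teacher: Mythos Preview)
Your overall architecture---Fourier expansion over ${\bm Z}_q^m$, isolating the ${\bm t}=0$ main term, and analyzing that term through the decomposition \eqref{rew} together with Lemmas~\ref{estimateonA} and~\ref{zerosum}---matches the paper's, and your Poisson-summation handling of the main term is a smooth variant of the interior/boundary cube argument carried out in Lemma~\ref{LemmaMain}.

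The genuine gap is the Weil-error step. You assert that for ${\bm t}\ne 0$ the character sum $\sum_{{\bm x}\in\mathcal C(d,{\bm a}\bar b,q)}e_q({\bm t}\cdot{\bm x})$ is $\ll_{m,d}q^{1/2}$, invoking Proposition~\ref{irred} for generic ${\bm a}$ and ``summing componentwise'' otherwise. But Bombieri--Weil on an irreducible component requires the linear form ${\bm t}\cdot{\bm y}$ to be \emph{nonconstant} on that component, not merely ${\bm t}\ne 0$. The star condition forces the partial sums $A_i$ to be distinct over ${\bf Z}$, yet modulo a small prime $p_j\mid q$ they may coincide; then $\mathcal C$ is reducible over ${\bm Z}_{p_j}$, and on components where $x_{i_2}=\zeta x_{i_1}$ for a root of unity $\zeta$ the form ${\bm t}\cdot{\bm y}$ can collapse to $0$ for many ${\bm t}\ne 0$ (this is exactly condition~\eqref{thetasum}). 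For such primes the sum contributes $p_j$ rather than $p_j^{1/2}$, so the correct bound is not $q^{1/2}$ but roughly $q^{1/2}D({\bm a},{\bm t})^{1/2}$, with $D({\bm a},{\bm t})$ the product of the bad primes as in \eqref{h}; note this is a \emph{different} object from the $D({\bm a})$ of Lemma~\ref{estimateonA} that you invoke for the main term.

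Averaging the extra factor $D({\bm a},{\bm t})^{1/2}$ jointly over ${\bm a}$ and ${\bm t}$ is in fact the longest and most delicate part of the paper's proof (the second half of Lemma~\ref{LemmaRemainder}): one must bound the density $\rho({\bm r},D)$ of those ${\bm a}$ with $D({\bm a},{\bm r})=D$ via a lattice-point count that exploits the congruences $A_{i_1}\equiv A_{i_2}\pmod{p_j}$ together with the Vandermonde-type inequality $\prod_{i_1<i_2}|A_{i_1}-A_{i_2}|\ll s^{L(L-1)/2}$. This entire mechanism is absent from your sketch, and without it the claimed error bound $q^{m-1/2+\varepsilon}/N^m$ is not justified.
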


Following \cite[Section 4]{Z03}, we reduce the proof of Theorem \ref{T2} to a point count on a curve over a finite field. We then prove Theorem \ref{T2} using the results of Sections \ref{SectionCurve} and \ref{SectionCount}.

By approximating $f$ from above and below by step functions, it is enough to prove the statement for the characteristic function of a compact set with piecewise smooth boundary $I\subset {\bf R}^{m-1}$. Given $b$ and $q$ as in the hypotheses, we thus want to show, for $q\to \infty$, that
\begin{equation}\label{goal}
R^{(m)}(N, d,b/q,I)\to \text{vol}(I), %\quad\quad \text{ as }q\to \infty,
\end{equation}
 where $N R^{(m)}(N, d,b/q,I)$ is the number of $m$-tuples $(x_1,\dots, x_m)$ with distinct components $x_1,\dots, x_m$ in $\{1,\dots, N\}$ such that
\begin{equation*}
N\bigg(\bigg{\{}\frac{bx_1^d}{q}\bigg{\}}-\bigg{\{}\frac{bx_2^d}{q}\bigg{\}},\dots, \bigg{\{}\frac{bx_{m-1}^d}{q}\bigg{\}}-\bigg{\{}\frac{bx_m^d}{q}\bigg{\}}\bigg)\in I.
\end{equation*}
We can write $R^{(m)}(N, d,b/q,I)$ in the form
\begin{equation}\label{star}
R^{(m)}(N, d,b/q,I)=\frac{1}{N}\sum_{{\bm a}\in sI}^{*}\nu(N,d,{\bm a},q),
\end{equation}
where $s=q/N$ is the dilate factor and
\begin{equation*}
\nu(N,d,{\bm a},q)=\#\{1\leq x_i\leq N\,\colon\, bx_i^d-bx_{i+1}^d=a_i\,(\text{mod } q), 1\leq i\leq m-1\}.
\end{equation*}
Here the star in \eqref{star} denotes summation over the vectors ${\bm a}$ for which the partial sums $A_i=\sum_{k\geq i}a_k$ are all distinct and non-zero, a condition which comes from the requirement that the $m$-tuples $(x_1,\dots, x_m)$ to be counted in $R^{(m)}(N, d,b/q,I)$ have distinct components.

\begin{lemma} We have
\begin{equation}\label{5.1}
R^{(m)}(N, d,b/q,I)=\frac{1}{Nq^m}\sum_{{\bm a}\in sI}^*\,\sum_{{\bm r} (\text{mod }q)}\sum_{{\bm y}\in \mathcal{C}(d,{\bm a}\bar{b},q)}e\bigg(\frac{-{\bm r}\cdot {\bm y}}{q}\bigg)\prod_{i=1}^m\,\sum_{1\leq x_i\leq N}e\bigg(\frac{r_ix_i}{q}\bigg).
\end{equation}
\end{lemma}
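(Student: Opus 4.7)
The plan is to apply additive character orthogonality modulo $q$ to each of the $m$ congruence conditions in $\nu(N,d,{\bm a},q)$, after first decoupling the problem into a sum over points of the curve $\mathcal{C}(d,{\bm a}\bar{b},q)$.

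First I would open up the definition of $\nu(N,d,{\bm a},q)$ by partitioning the $m$-tuples $(x_1,\dots,x_m)\in\{1,\dots,N\}^m$ according to their residues modulo $q$. Writing $y_i\equiv x_i\pmod q$, the congruence system $bx_i^d-bx_{i+1}^d\equiv a_i\pmod q$ depends only on ${\bm y}=(y_1,\dots,y_m)\in {\bm Z}_q^m$, and this system is precisely the condition ${\bm y}\in\mathcal{C}(d,{\bm a}\bar b,q)$. Therefore
\begin{equation*}
\nu(N,d,{\bm a},q)=\sum_{{\bm y}\in\mathcal{C}(d,{\bm a}\bar b,q)}\prod_{i=1}^{m}\#\{1\leq x_i\leq N:\, x_i\equiv y_i\pmod q\}.
\end{equation*}

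Next I would detect each congruence $x_i\equiv y_i\pmod q$ by the standard orthogonality identity
\begin{equation*}
\mathbf{1}[x_i\equiv y_i\pmod q]=\frac{1}{q}\sum_{r_i\,(\moda q)}e\!\left(\frac{r_i(x_i-y_i)}{q}\right),
\end{equation*}
and separate the sums over $x_i$ and $y_i$. This yields
\begin{equation*}
\#\{1\leq x_i\leq N:\, x_i\equiv y_i\pmod q\}=\frac{1}{q}\sum_{r_i\,(\moda q)}e\!\left(\frac{-r_iy_i}{q}\right)\sum_{1\leq x_i\leq N}e\!\left(\frac{r_ix_i}{q}\right).
\end{equation*}
Taking the product over $i=1,\dots,m$, the factors of $1/q$ combine to $1/q^m$, the characters at the points of the curve combine to $e(-{\bm r}\cdot{\bm y}/q)$, and the $x_i$-sums remain as independent one-variable geometric sums.

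Finally, substituting back into $R^{(m)}(N,d,b/q,I)=\tfrac{1}{N}\sum_{{\bm a}\in sI}^{*}\nu(N,d,{\bm a},q)$ and interchanging the order of summation gives \eqref{5.1}. There is no real obstacle here; the only point worth noting is that the partial-sums condition indicated by the star on the ${\bm a}$-sum is inherited verbatim from the definition of $R^{(m)}$ (it encodes the distinctness of the components of the tuples $(x_1,\dots,x_m)$), so it is preserved throughout the manipulation. The identity is then a straightforward consequence of additive orthogonality modulo $q$.
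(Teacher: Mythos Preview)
Your argument is correct and is precisely the ``standard application of Fourier expansion'' that the paper alludes to (deferring details to \cite[Section~4]{Z03}): partition the $x_i$ by their residues modulo $q$ to land on the curve $\mathcal{C}(d,{\bm a}\bar b,q)$, then detect each congruence $x_i\equiv y_i\pmod q$ by additive orthogonality. There is no methodological difference from the paper's intended proof.
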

\begin{proof}
The lemma can be deduced from a standard application of Fourier expansion. See \cite[Section 4]{Z03} for details.
\end{proof}

The last sums appearing in \eqref{5.1} are geometric progressions and can be bounded by
\begin{equation}\label{ebound}
\norm{\sum_{1\leq x_i\leq N}e\bigg(\frac{r_ix_i}{q}\bigg)}\ll \min\bigg{\{} N, \frac{q}{|r_i|}\bigg{\}},
\end{equation}
where the $r_i$ are assumed to lie in the interval $\big{[}-\frac{q}{2},\frac{q}{2}\big{]}$
To prove \eqref{goal}, we first consider the contribution of ${\bm r}=0$ on the right side of \eqref{5.1}. We obtain the term
\begin{equation*}
\mathcal{M}=\frac{N^{m-1}}{q^m}\sum_{{\bm a}\in sI}^*\nu(d, {\bm a}, q),
\end{equation*}
where $\nu(d, {\bm a}, q)$ is the quantity defined in \eqref{defnu}. We let
\begin{equation}\label{remainder}
\mathcal{E}=R^{(m)}(N, d,b/q,I)-\mathcal{M}.
\end{equation}
Theorem \ref{T2} follows from the two lemmas below.
\begin{lemma}\label{LemmaMain}
As $q\to \infty$, we have $\mathcal{M}=\vol(I)+o(1).$
\end{lemma}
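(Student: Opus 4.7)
The plan is to plug the Chinese–remainder expansion \eqref{rew} into the definition of $\mathcal{M}$ and isolate the $\mathcal{S}=\emptyset$ term, which produces the main term $\vol(I)$. Writing
\[
\mathcal{M} \;=\; \frac{N^{m-1}}{q^{m-1}}\,\#\{{\bm a}\in {\bm Z}^{m-1}\cap sI\}^{*}\;+\;\frac{N^{m-1}}{q^{m-1}}\sum_{{\bm a}\in sI}^{*}\,\sum_{\emptyset\neq\mathcal{S}}\frac{A(d,{\bm a},c_{\mathcal{S}})}{c_{\mathcal{S}}}\;=:\;\mathcal{M}_0+\mathcal{M}_1,
\]
the first step is to dispatch $\mathcal{M}_0$. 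Since $N\leq q^{1-\delta_0}$ forces $s=q/N\geq q^{\delta_0}\to\infty$, the standard lattice-point count for the compact region $sI$ with piecewise smooth boundary gives $\#({\bm Z}^{m-1}\cap sI)=s^{m-1}\vol(I)+O(s^{m-2})$; the vectors excluded by the star condition lie on the finitely many hyperplanes $\sigma_{ij}({\bm a})=0$, contributing only $O(s^{m-2})$ additional lattice points. Combined with the prefactor $s^{-(m-1)}$ this yields $\mathcal{M}_0=\vol(I)+O(s^{-1})=\vol(I)+o(1)$.

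For $\mathcal{M}_1$ the strategy is to combine the exact cancellation of Lemma \ref{zerosum} with the square-root saving of Lemma \ref{estimateonA}. Since $A(d,\cdot,c_{\mathcal{S}})$ is $c_{\mathcal{S}}$-periodic, for each nonempty $\mathcal{S}$ I would partition the sum by residue classes modulo $c_{\mathcal{S}}$:
\[
\sum_{{\bm a}\in sI} A(d,{\bm a},c_{\mathcal{S}}) \;=\; \sum_{{\bm b}\,(\moda c_{\mathcal{S}})} A(d,{\bm b},c_{\mathcal{S}})\,\Bigl(\tfrac{s^{m-1}\vol(I)}{c_{\mathcal{S}}^{m-1}}+E_{{\bm b}}\Bigr),
\]
where $E_{{\bm b}}$ is the lattice-discrepancy of the shifted lattice $c_{\mathcal{S}}{\bm Z}^{m-1}+{\bm b}$ in $sI$. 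Lemma \ref{zerosum} annihilates the leading ``expected'' contribution, and the star-exclusion contributes only from the $O(s^{m-2})$ lattice points on the hyperplanes $D({\bm a})=0$, which is handled by the trivial bound $|A|\leq c_{\mathcal{S}}^{m}$. The remaining sum $\sum_{\bm b} A(d,{\bm b},c_{\mathcal{S}})\,E_{{\bm b}}$ is bounded by pairing the Weil-type estimate $|A|\ll c_{\mathcal{S}}^{1/2+\varepsilon}(c_{\mathcal{S}},D({\bm b}))^{1/2}q^{2\delta m}$ from Lemma \ref{estimateonA} against the $L^{1}$ discrepancy bound afforded by the piecewise smoothness of $\partial I$. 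Summing over the at most $2^{\omega(q)}\ll q^{\varepsilon}$ nonempty subsets $\mathcal{S}\subseteq\{1,\dots,r\}$, and invoking $\tilde{q}\geq q^{1-\delta}$ to cap repeated prime-power contributions at $q^{2\delta m}$, I would obtain $\mathcal{M}_1\ll q^{-\eta}$ for some $\eta=\eta(m,\delta_0)>0$, provided $\delta$ is chosen small enough relative to $\delta_0$.

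The main obstacle is precisely this last step: balancing the square-root saving $c_{\mathcal{S}}^{1/2+\varepsilon}$ against the lattice-discrepancy error uniformly over divisors $c_{\mathcal{S}}\mid q$. For small $c_{\mathcal{S}}$ the lattice count is accurate and the Weil bound dominates, while for $c_{\mathcal{S}}$ close to $q$ the naive lattice count breaks down (since $s\leq q^{1/(2m)-\delta_0}$ can be much smaller than $c_{\mathcal{S}}$) and the whole estimate must be carried by the square-root cancellation in Lemma \ref{estimateonA} together with the gcd factor $(c_{\mathcal{S}},D({\bm a}))^{1/2}$. The range $N\geq q^{1-1/(2m)+\delta_0}$ is exactly what makes this trade-off feasible: the exponent $1/(2m)$ matches the Weil barrier for the $(m-1)$-dimensional error sum, while $\delta_0$ creates the slack needed to absorb the $q^{\varepsilon}$ and $q^{2\delta m}$ losses.
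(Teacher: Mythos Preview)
Your overall architecture matches the paper's: expand $\nu$ via \eqref{rew}, extract $\vol(I)$ from $\mathcal{S}=\emptyset$, and control the rest by combining Lemma~\ref{zerosum} (cancellation over full residue systems) with Lemma~\ref{estimateonA} (Weil-type square-root saving), splitting according to the size of $c_{\mathcal{S}}$. The paper carries this out with an explicit threshold $s_1=\sqrt{s}$: for $c_{\mathcal{S}}>s_1$ it drops the star and applies Lemma~\ref{estimateonA} directly, grouping ${\bm a}$ by the value of $(c_{\mathcal{S}},D({\bm a}))$; for $1<c_{\mathcal{S}}<s_1$ it tiles $(sI)^*$ by cubes of side $c_{\mathcal{S}}$, kills the interior cubes via Lemma~\ref{zerosum}, and bounds the $O(c_{\mathcal{S}}s^{m-2})$ boundary points with $|A|\ll c_{\mathcal{S}}^{1+\varepsilon}q^{2\delta m}$.

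There is one concrete error in your write-up. You say the star-exclusion (the $O(s^{m-2})$ points with $D({\bm a})=0$) ``is handled by the trivial bound $|A|\leq c_{\mathcal{S}}^{m}$''. This is far too crude: after dividing by $s^{m-1}c_{\mathcal{S}}$ and summing over $\mathcal{S}$, the term $\mathcal{S}=\{1,\dots,r\}$ alone contributes $\gg q^{m-1}/s$, which diverges. The fix is immediate---use Lemma~\ref{estimateonA} here as well, noting that $D({\bm a})=0$ forces $(c_{\mathcal{S}},D({\bm a}))=c_{\mathcal{S}}$, hence $|A|\ll c_{\mathcal{S}}^{1+\varepsilon}q^{2\delta m}$; the excluded hyperplanes then contribute $\ll q^{2\delta m+2\varepsilon}/s=o(1)$. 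Equivalently, do as the paper does and never separate the star-exclusion at all: work with $(sI)^*$ throughout, so the hyperplane neighbourhoods are absorbed into the boundary count for small $c_{\mathcal{S}}$ and into the direct Weil estimate for large $c_{\mathcal{S}}$.

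A second, softer point: your ``$L^1$ discrepancy'' paragraph correctly diagnoses the small/large dichotomy but does not execute it. Since the residue-class decomposition you write down is only useful when $c_{\mathcal{S}}\ll s$ (otherwise $E_{\bm b}$ is essentially the indicator of the single lattice point in that class and Lemma~\ref{zerosum} buys nothing), you should make the split explicit as the paper does, and in the large-$c_{\mathcal{S}}$ regime simply bound $\sum_{{\bm a}\in sI}|A(d,{\bm a},c_{\mathcal{S}})|$ after sorting by $t=(c_{\mathcal{S}},D({\bm a}))$.
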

\begin{lemma}\label{LemmaRemainder}
As $q\to \infty$, we have $\mathcal{E}=o(1).$
\end{lemma}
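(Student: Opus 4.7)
The plan is to establish the uniform bound $|T({\bm r})|\ll_{m,d,\varepsilon} (q/N)^{m-1} q^{1/2+\varepsilon+2\delta m}$ for all ${\bm r}\not\equiv 0\,(\moda q)$, where
\[
T({\bm r})=\sum_{{\bm a}\in sI}^*\sum_{{\bm y}\in \mathcal{C}(d,{\bm a}\bar b,q)}e\!\left(\frac{-{\bm r}\cdot {\bm y}}{q}\right),
\]
and then to combine this with the geometric progression bound \eqref{ebound} and the standard one-dimensional estimate $\sum_{|r|\leq q/2}\min\{N,q/|r|\}\ll q\log q$ to conclude.

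To extract cancellation in the curve exponential sum I would run the Chinese Remainder decomposition exactly as in Section \ref{SectionCount}, splitting the sum over ${\bm y}\in \mathcal{C}(d,{\bm a}\bar b,q)$ into a product of sums over ${\bm y}_p\in \mathcal{C}(d,{\bm a}\bar b, p^{e_p})$ for each prime power $p^{e_p}\,\|\, q$. For a prime $p\mid \tilde q$ with $p\nmid d$ and $p\nmid D({\bm a})$, Proposition \ref{irred} forces $\mathcal{C}(d,{\bm a}\bar b,p)$ to be absolutely irreducible, and the Weil bound \cite{W48}, \cite[Theorem 5A and Corollary 5B]{Sc76} gives
\[
\left|\sum_{{\bm y}\in \mathcal{C}(d,{\bm a}\bar b,p)} e_p(-{\bm r}\cdot {\bm y})\right|\ll_{m,d} p^{1/2}.
\]
For the remaining bad prime powers (those dividing $d$, those at which ${\bm a}$ reduces to a vector violating the star condition, or those with exponent $\geq 2$) the trivial bound $p^{e_p m}$ is used, exactly as in the proof of Lemma \ref{estimateonA}; the hypothesis $\tilde q\geq q^{1-\delta}$ controls the contribution of the non-squarefree part by a factor $q^{2\delta m}$. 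Combining yields
\[
\bigg|\sum_{{\bm y}\in \mathcal{C}(d,{\bm a}\bar b,q)} e_q(-{\bm r}\cdot {\bm y})\bigg|\ll_{m,d,\varepsilon} q^{1/2+\varepsilon}(q,D({\bm a}))^{1/2}q^{2\delta m}.
\]
Summing over ${\bm a}\in sI$ (a set of size $\asymp (q/N)^{m-1}$) and using that the divisor-type factor $(q,D({\bm a}))^{1/2}$ averages to $O(q^{\varepsilon})$ gives the claimed bound on $T({\bm r})$.

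Putting everything together,
\[
|\mathcal{E}|\ll \frac{1}{Nq^m}\cdot(q/N)^{m-1}q^{1/2+\varepsilon+2\delta m}\cdot (q\log q)^m\ll \frac{q^{m-1/2+\varepsilon+2\delta m}(\log q)^m}{N^m}.
\]
The hypothesis $N\geq q^{1-1/(2m)+\delta_0}$ gives $N^m\geq q^{m-1/2+m\delta_0}$, so $|\mathcal{E}|\ll q^{-m\delta_0+\varepsilon+2\delta m}(\log q)^m=o(1)$ provided $\delta$ and $\varepsilon$ are chosen small enough compared to $\delta_0$; this fixes the constant $\delta=\delta(m,\delta_0)$ in Theorem \ref{T2}. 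The main obstacle is the uniform transfer of the point-count analysis of Section \ref{SectionCount} to the exponential-sum setting: one must show that the bound inherited from Lemma \ref{estimateonA} survives the introduction of the additive character $e_q(-{\bm r}\cdot {\bm y})$, which requires decomposing $\mathcal{C}(d,{\bm a}\bar b,p)$ into its irreducible components at bad primes and applying the Weil bound component-by-component without losing the gain encoded in $(c_{\mathcal{S}},D({\bm a}))^{1/2}q^{2\delta m}$.
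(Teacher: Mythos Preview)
Your argument has a genuine gap at the Weil step. The bound
\[
\left|\sum_{{\bm y}\in \mathcal{C}(d,{\bm a}\bar b,p)} e_p(-{\bm r}\cdot {\bm y})\right|\ll_{m,d} p^{1/2}
\]
requires not merely that $\mathcal{C}(d,{\bm a}\bar b,p)$ be absolutely irreducible, but that the linear form ${\bm r}\cdot{\bm y}$ be non-constant on the curve. When $p\mid r_i$ for every $i$ the character is trivial and the sum equals $\nu(d,{\bm a},p)\sim p$, not $p^{1/2}$. More subtly, even when ${\bm r}\not\equiv 0\pmod p$, if $p\mid D({\bm a})$ the curve splits into components on some of which ${\bm r}\cdot{\bm y}$ can be constant (e.g.\ ${\bm r}=(1,-1,0,\dots,0)$ and $p\mid a_1$ forces $y_1=y_2$ on one component). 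Your claimed pointwise bound $q^{1/2+\varepsilon}(q,D({\bm a}))^{1/2}q^{2\delta m}$ therefore omits a factor that genuinely depends on ${\bm r}$; for instance with $q$ squarefree, $D({\bm a})$ coprime to $q$, and ${\bm r}=(r,0,\dots,0)$ with $r\mid q$, the true size is $\asymp q^{1/2}\gcd(r,q)^{1/2}$, which can be as large as $q^{3/4}$. The uniform estimate $|T({\bm r})|\ll (q/N)^{m-1}q^{1/2+\varepsilon+2\delta m}$ is thus false, and the subsequent summation over ${\bm r}$ collapses.

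The paper's proof confronts exactly this coupling. It introduces $D({\bm a},{\bm r})$, the product of primes $p_j\,\|\,q$ at which the character degenerates in the sense of \eqref{thetasum}, obtains the corrected bound $\ll q^{1/2+3\delta m}D({\bm a},{\bm r})^{1/2}$, and then---crucially---sums over ${\bm a}$ and ${\bm r}$ jointly. For fixed ${\bm r}$ and fixed value $D$ of $D({\bm a},{\bm r})$ one is led to bound the density $\rho({\bm r},D)$ of admissible ${\bm a}$, and the heart of the argument is the lattice-point estimate \eqref{maineq1}, proved by organising the congruence constraints on the partial sums $A_i$ and exploiting the inequality $D\ll s^L$ via a Vandermonde-type product. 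Decoupling the ${\bm a}$- and ${\bm r}$-sums, as you propose, loses precisely this saving; even after inserting the missing $\gcd({\bm r},q)^{1/2}$ factor the resulting bound fails to be $o(1)$.
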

\begin{proof}[Proof of Lemma \ref{LemmaMain}]
By \eqref{rew} we can rewrite $\mathcal{M}$ as
\begin{equation*}
\mathcal{M}=\frac{N^{m-1}}{q^{m-1}}\sum_{\mathcal{S}\subset\{1,\dots,r\}}\frac{1}{c_{\mathcal{S}}}\sum_{{\bm a}\in sI}^*A(d,{\bm a}, c_{\mathcal{S}}).
\end{equation*}
Recall that by definition $c_{\mathcal{S}}=1$ when $\mathcal{S}=\emptyset$. The contribution of $\mathcal{S}=\emptyset$ is therefore
\begin{equation*}
\bigg(\frac{N}{q}\bigg)^{m-1}\sum_{{\bm a}\in sI}^*1=\frac{1}{s^{m-1}}\#\big{\{}(sI)^*\cap {\bm Z}^{m-1}\big{\}}=\vol(I)+O\bigg(\frac{1}{s}\bigg),
\end{equation*}
where $(sI)^*$ is the set of vectors ${\bm a}\in sI$ with all the partial sums $A_i$ distinct and non-zero.

For the remaining divisors $c_{\mathcal{S}}$ of $q$ we distinguish two cases. Let $\mathfrak{A}$ be the set of all subsets $\mathcal{S}\subset\{1,\dots,r\}$ for which $c_{\mathcal{S}}>s_1$, and let $\mathfrak{B}$ be the set of all $\mathcal{S}\subset\{1,\dots,r\}$ for which $1<c_{\mathcal{S}}<s_1$. Here $s_1$ is a parameter that will be chosen later. By Lemma \ref{estimateonA},
\begin{equation}\label{f1}
\begin{split}
\frac{1}{s^{m-1}}\sum_{\mathcal{S}\in \mathfrak{A}} \frac{1}{c_{\mathcal{S}}}\sum_{{\bm a}\in sI}^* A(d,{\bm a},c_{\mathcal{S}})&\ll_{m,d,\varepsilon} \frac{1}{s^{m-1}}\sum_{\mathcal{S}\in \mathfrak{A}} \frac{1}{c_{\mathcal{S}}} c_{\mathcal{S}}^{1/2+\varepsilon}\sum_{{\bm a}\in sI}(c_{\mathcal{S}}, D({\bm a}))^{1/2}q^{2\delta m}\\
&= \frac{1}{s^{m-1}}\sum_{\mathcal{S}\in \mathfrak{A}} c_{\mathcal{S}}^{\varepsilon-1/2}q^{2\delta m}\sum_{t\mid c_{\mathcal{S}}}  t^{1/2}\#\{{\bm a}\in sI\,\colon\, (c_{\mathcal{S}}, D({\bm a}))=t\}\\&\leq \frac{1}{s^{m-1}}\sum_{\mathcal{S}\in\mathfrak{A}}c_{\mathcal{S}}^{\varepsilon-1/2} q^{2\delta m}\sum_{t\mid c_{\mathcal{S}}} t^{1/2}\#(J(t)),
\end{split}
\end{equation}
where $J(t)=\{{\bm a}\in sI\,\colon\, t\mid D({\bm a})\}$. One can easily see that
\begin{equation}\label{f2}
\#(J(t))\ll_{m,d,\varepsilon,I} t^{\varepsilon}s^{m-1}\bigg(\frac{1}{t}+\frac{1}{s}\bigg).
\end{equation}
Equations \eqref{f1} and \eqref{f2} yield
\begin{equation*}
\frac{1}{s^{m-1}}\sum_{\mathcal{S}\in \mathfrak{A}} \frac{1}{c_{\mathcal{S}}}\sum_{{\bm a}\in sI}^* A(d,{\bm a},c_{\mathcal{S}})\ll_{m,d,\varepsilon,I}\sum_{\mathcal{S}\in \mathfrak{A}} c_{\mathcal{S}}^{\varepsilon-1/2}\sum_{t\mid c_{\mathcal{S}}}q^{2\delta m}t^{1/2+\varepsilon}\bigg(\frac{1}{t}+\frac{1}{s}\bigg)=\sigma_1+\sigma_2,
\end{equation*}
where
\begin{equation}\label{sigma1}
\sigma_1=\sum_{\mathcal{S}\in \mathfrak{A}} c_{\mathcal{S}}^{\varepsilon-1/2}\sum_{t\mid c_{\mathcal{S}}}q^{2\delta m}t^{-1/2+\varepsilon}\leq q^{2\delta m} s_1^{\varepsilon-1/2}\#\{(c_{\mathcal{S}},t)\,\colon\, t\mid c_{\mathcal{S}}\mid q\}\leq q^{2\delta m+2\varepsilon}s_1^{\varepsilon-1/2},
\end{equation}
\begin{equation}\label{sigma2}
\sigma_2=\sum_{\mathcal{S}\in \mathfrak{A}} c_{\mathcal{S}}^{\varepsilon-1/2}\sum_{t\mid c_{\mathcal{S}}}q^{2\delta m}t^{1/2+\varepsilon}\frac{1}{s}\leq \frac{1}{s}\sum_{c_{\mathcal{S}}\mid q}\sum_{t\mid c_{\mathcal{S}}}q^{2\delta m}(tc_{\mathcal{S}})^{\varepsilon}\bigg(\frac{t}{c_{\mathcal{S}}}\bigg)^{1/2}\leq \frac{1}{s}\,q^{4\varepsilon+2\delta m}.
\end{equation}
Equation \eqref{sigma2} shows that $\sigma_2 = o(1)$ as $q\to\infty$ for $\delta$ small enough in terms of $m$. Letting $s_1=\sqrt{s}$, we see from \eqref{sigma1} that \begin{equation}\label{sigma12}\sigma_1\leq s^{\varepsilon/2-1/4}q^{2(\delta m+\varepsilon)}.
\end{equation} Since $s=q/N\geq q^{\delta_0}$, the inequality \eqref{sigma12} implies that $\sigma_1$ is $o(1)$ as $q\to\infty$ for $\delta$ small enough in terms of $m$ and $\delta_0$.

We now consider the divisors $c_{\mathcal{S}}$ of $q$ such that $1< c_{\mathcal{S}}<s_1=\sqrt{s}$. Recall that we denote by $(sI)^*$
the set of vectors ${\bm a}\in sI$ such that all the partial sums $A_i$ are distinct and non-zero. We divide the region $(sI)^*$ into integer cubes of side $c_{\mathcal{S}}$ of the form $y+c_{\mathcal{S}}B$, where $y\in c_{\mathcal{S}}{\bm Z}^{m-1}$ and $B=\{0\leq x_i< 1\}$ is the unit cube in ${\bm R}^{m-1}$. We call a cube {\em $c_{\mathcal{S}}$-interior} if it is entirely contained in $(sI)^*$. By the Lipschitz principle (see \cite{D51}) it follows that the number $n_{c_{\mathcal{S}}}$ of {\em $c_{\mathcal{S}}$-interior} cubes is given by
\begin{equation*}
n_{c_{\mathcal{S}}}=\vol\Big(\frac{s}{c_{\mathcal{S}}} I\Big)+O_{I}\bigg(\Big(\frac{s}{c_{\mathcal{S}}}\Big)^{m-2}\bigg)=\Big(\frac{s}{c_{\mathcal{S}}}\Big)^{m-1}\vol(I)+O_I\bigg(\Big(\frac{s}{c_{\mathcal{S}}}\Big)^{m-2}\bigg).
\end{equation*}
We say that a point $a\in sI\cap {\bm Z}^{m-1}$ is {\em $c_{\mathcal{S}}$-interior} if it is contained in a {\em $c_{\mathcal{S}}$-interior} cube, and is {\em $c_{\mathcal{S}}$-boundary} otherwise. Each interior cube contains $c_{\mathcal{S}}^{m-1}$
{\em $c_{\mathcal{S}}$-interior} points, so the total number of {\em $c_{\mathcal{S}}$-interior} points is
\begin{equation}\label{cinterior}
c_{\mathcal{S}}^{m-1}n_{c_{\mathcal{S}}}=s^{m-1}\vol(I)+O(c_{\mathcal{S}}s^{m-2}).
\end{equation}
The total number of points of $sI\cap {\bm Z}^{m-1}$ is $s^{m-1}\vol(I)+O(s^{m-2})$. Subtracting the number of {\em $c_{\mathcal{S}}$-interior} points given by \eqref{cinterior}, we obtain that the number of {\em $c_{\mathcal{S}}$-boundary} points is $O(c_{\mathcal{S}}s^{m-2})$.

For every divisor $c_{\mathcal{S}}$ of $q$, with $\mathcal{S}\in\mathfrak{B}$, we write
\begin{equation*}
\sum_{{\bm a}\in sI}^*A(d,{\bm a}, c_{\mathcal{S}})=\sum_{{\bm a} \, c_{\mathcal{S}}\textit{-boundary}}^*A(d,{\bm a}, c_{\mathcal{S}})\,+ \sum_{{\bm a} \, c_{\mathcal{S}}\textit{-interior}}^*A(d,{\bm a}, c_{\mathcal{S}}).
\end{equation*}
Since the sum over each {\em $c_{\mathcal{S}}$-interior} cube is just a sum over $({\bm Z}/c_{\mathcal{S}}{\bm Z})^{m-1}$, Lemma \ref{zerosum} implies
\begin{equation*}
\sum_{{\bm a}\in sI}^*A(d,{\bm a}, c_{\mathcal{S}})=\sum_{{\bm a} \, c_{\mathcal{S}}\textit{-boundary}}^*A(d,{\bm a}, c_{\mathcal{S}}).
\end{equation*}
By Lemma \ref{estimateonA} and the fact that the number of {\em $c_{\mathcal{S}}$-boundary} points is $O(c_{\mathcal{S}}s^{m-2})$ we obtain

\begin{equation*}
\begin{split}
\frac{1}{s^{m-1}}\sum_{\mathcal{S}\in \mathfrak{B}} \frac{1}{c_{\mathcal{S}}}\sum_{{\bm a}\in sI}^* A(d,{\bm a},c_{\mathcal{S}})&=
\frac{1}{s^{m-1}}\sum_{\mathcal{S}\in \mathfrak{B}} \frac{1}{c_{\mathcal{S}}}\,\sum_{{\bm a}\, c_{\mathcal{S}}\textit{-boundary}}^* A(d,{\bm a},c_{\mathcal{S}})\\&\ll_{d,m,\varepsilon} \frac{1}{s^{m-1}}\sum_{\mathcal{S}\in\mathfrak{B}}\frac{1}{c_{\mathcal{S}}} c_{\mathcal{S}}^{2+\varepsilon}s^{m-2}q^{2\delta m}=\frac{1}{s}\sum_{\mathcal{S}\in\mathfrak{B}}c_{\mathcal{S}}^{1+\varepsilon}q^{2\delta m}\\&\ll_{\varepsilon}q^{2\delta m}\frac{1}{s}s_1^{1+\varepsilon}\#\{c_{\mathcal{S}}\mid q\}\ll_{\varepsilon} s^{-1/2+\varepsilon/2} q^{\varepsilon+2\delta m}\leq q^{-\delta_0/2+\varepsilon\delta_0/2+\varepsilon+2\delta m},
\end{split}
\end{equation*}
which is $o(1)$ as $q\to\infty$ for $\delta$ small enough in terms of $m$ and $\delta_0$. This concludes the proof of Lemma \ref{LemmaMain}.
\end{proof}

\begin{proof}[Proof of Lemma \ref{LemmaRemainder}]
By \eqref{5.1}, \eqref{ebound} and \eqref{remainder},   
\begin{align}\label{remainder2}
\mathcal{E}&=\frac{1}{Nq^m}\sum_{{\bm a}\in sI}^*\,\sum_{\substack{{\bm r} (\text{mod }q)\\{\bm r} \neq 0}}\sum_{{\bm y}\in \mathcal{C}(d,{\bm a}\bar{b},q)}e\bigg(\frac{-{\bm r}\cdot {\bm y}}{q}\bigg)\prod_{i=1}^m\,\sum_{1\leq x_i\leq N}e\bigg(\frac{r_ix_i}{q}\bigg) \notag\\
&\ll\frac{1}{Nq^m}\sum_{{\bm a}\in sI}^*\,\sum_{\substack{{\bm r} (\text{mod }q)\\{\bm r} \neq 0}}\sum_{{\bm y}\in \mathcal{C}(d,{\bm a}\bar{b},q)}e\bigg(\frac{-{\bm r}\cdot {\bm y}}{q}\bigg)\prod_{i=1}^m\,\min\bigg{\{} N, \frac{q}{|r_i|}\bigg{\}}.
\end{align}
We start by observing that, if $q=\prod_jp_j^{k_j}$ is the decomposition of $q$ into primes, then (see \cite{Z03} for details)
\begin{equation}\label{rema}
\sum_{{\bm y}\in \mathcal{C}(d,{\bm a}\bar{b},q)}e\bigg(-\frac{{\bm r}\cdot {\bm y}}{q}\bigg)=\prod_j \sum_{{\bm y}\in \mathcal{C}(d,{\bm a}\bar{b},p_j^{k_j})}e\bigg(-\frac{b_j{\bm r}\cdot {\bm y}}{p_j^{k_j}}\bigg),
\end{equation}
where the $b_j$ are given by
\begin{equation*}
b_j=\prod_{l\neq j}p_l^{-k_l}\, (\text{mod } p_j^{k_j}).
\end{equation*}
We use the trivial bound
\begin{equation*}
 \Bigg{|}\sum_{{\bm y}\in \mathcal{C}(d,{\bm a}\bar{b},p_j^{k_j})}e\bigg(-\frac{b_j{\bm r}\cdot {\bm y}}{p_j^{k_j}}\bigg)\Bigg{|}\leq p_j^{mk_j}
\end{equation*}
for the factors on the right side of \eqref{rema} for which $k_j\geq 2$. Since $\tilde{q}\leq q^{1-\delta}$, we obtain
\begin{equation}\label{kj2}
 \Bigg{|}\prod_{k_j\geq 2}\sum_{{\bm y}\in \mathcal{C}(d,{\bm a}\bar{b},p_j^{k_j})}e\bigg(-\frac{b_j{\bm r}\cdot {\bm y}}{p_j^{k_j}}\bigg)\Bigg{|}\leq
 \prod_{k_j\geq 2}p_j^{mk_j}\leq q^{2\delta m}.
\end{equation}
We consider those primes $p_j$ for which $k_j=1.$ For such primes $p_j$ which divide $d,$ we have
the trivial bound
\begin{equation}\label{pjd}
 \Bigg{|}\prod_{\substack{k_j=1\\p_j\mid d}}\sum_{{\bm y}\in \mathcal{C}(d,{\bm a}\bar{b},p_j)}e\bigg(-\frac{b_j{\bm r}\cdot {\bm y}}{p_j}\bigg)\Bigg{|}\leq d^m.
\end{equation}
Next, for primes $p_j\nmid d,$  we use the Bombieri-Weil inequality \cite[Theorem 6]{B66}, which gives
\begin{equation}\label{BW}
\Bigg{|}\sum_{{\bm y}\in \mathcal{C}(d,{\bm a}\bar{b},p_j)}e\bigg(-\frac{b_j{\bm r}\cdot {\bm y}}{p_j}\bigg)\Bigg{|}\ll_{m} p_j^{1/2}
\end{equation}
provided that  the partial sums $A_i$ are distinct mod $p_j.$
We can apply \eqref{BW} only if ${\bm y}\cdot {\bm r}$ is not constant on any component of the curve $\mathcal{C}(d,{\bm a}\bar{b},p_j).$ Note that Proposition \ref{irred} guarantees that every curve $\mathcal{C}(d,{\bm a}\bar{b},p_j)$ is irreducible.
 In the next paragraph we prove that if ${\bm y}\cdot {\bm r}$ is constant on a curve $\mathcal{C}(d,{\bm a}\bar{b},p_j)$ then ${\bm r}=0$, which is never the case for the terms considered in Lemma \ref{LemmaRemainder} (see \eqref{remainder2}).
 
 Let $k=\overline{\bm Z}_{p_j}$ denote the algebraic closure of the field ${\bm Z}_{p_j}={\bm Z}/p_j{\bm Z}$. Then, in the function field $k(Y_1,\dots, Y_m)$ of the curve $\mathcal{C}(d,{\bm a}\bar{b},p_j)$, $Y_1$ is a variable and $Y_2,\dots, Y_m$ are algebraic functions such that
\begin{equation*}
Y^d_i=Y^d_1 - (a_1+\dots +a_{i-1})\bar{b}\,\,\,\,\,\,\,\,\,\text{ for } \,\,2\leq i\leq m.
\end{equation*}
Recall that the key step in the proof of Proposition \ref{irred} was to show the irreducibility of some polynomials (there denoted by $f$) over some quotient fields (see \eqref{ultimata}). The same exact argument shows, for every $i\in\{2,\dots,m\}$, that the polynomial
\begin{equation}\label{polio}
x^d-(Y_1^d-(a_1+\dots+a_{i-1})\bar{b})
\end{equation}
is irreducible in the ring $k(Y_1,\dots, Y_{i-1})[x]$. It follows that 
 \begin{equation}\label{unooo}
 [k(Y_1,\dots, Y_m):k(Y_1)]=d^{m-1}.
 \end{equation}
Assume now by contradiction that ${\bm y}\cdot {\bm r}=c$, with $c\in k$ and ${\bm r}\neq 0$. Let $j_0\in\{1,\dots, m\}$ be such that $r_{j_0}\neq 0$. Then $Y_{j_0}$ belongs to $k(Y_1,\dots, Y_{j_0-1}, Y_{j_0+1},\dots, Y_m)$, and therefore
\begin{equation}\label{equality}
k(Y_1,\dots, Y_m)=k(Y_1,\dots,Y_{j_0-1}, Y_{j_0+1},\dots, Y_m).
\end{equation}
Applying again the irreducibility of the polynomials in \eqref{polio} we obtain
\[  [k(Y_1,\dots,Y_{j_0-1}, Y_{j_0+1},\dots, Y_m) \colon k(Y_1)]=d^{m-2},\]
which, together with \eqref{equality}, contradicts \eqref{unooo}.
We thus conclude that for the primes $p_j\nmid d$ such that the sums $A_i$ are distinct mod $p_j,$
the inequality \eqref{BW} holds true.

Now, in general, for each pair of ${\bm a}$ and $p_j,$ we have a partition $\mathscr P=\{V_1, \ldots, V_{\ell}\}$ of the set $V=\{1, 2, \ldots, m\}$ where $A_{i_1}=A_{i_2} \pmod{p_j}$ if and only if $i_1, i_2 \in V_{\ell'}$ for some $1\leq \ell' \leq \ell.$
Using this partition, for each ${\bm r}$ we write ${\bm r}\cdot {\bm y}$ as
\begin{equation*}
{\bm r}\cdot {\bm y}=\sum_{\ell'=1}^{\ell}\sum_{i\in V_{\ell'}}r_iy_i.
\end{equation*}
By the definition of $\mathscr P,$ if $1\leq i_1\neq i_2\leq m$ belong to the same set,
then  the equation
\begin{equation}\label{eqmodpj}
x^d_{i_1}-x^d_{i_2}=0 \pmod{p_j}
\end{equation} is one of the equations defining the curve $\mathcal{C}(d,{\bm a}\bar{b},p_j).$

Let $\gamma_j=(d, p_{j}-1).$ Since $\gamma_j \mid (p_{j}-1),$ there exists a $\gamma_j$-th primitive root of unity mod $p_j,$ say $\alpha_j.$  Then equation \eqref{eqmodpj} gives
\begin{equation}\label{solns}
x_{i_2}=\alpha^t_jx_{i_1}, \quad 0\leq t\leq \gamma_j-1.
\end{equation}
Replacing \eqref{eqmodpj} by \eqref{solns}, we can regard $\mathcal{C}(d,{\bm a}\bar{b},p_j)$ as the union of $\gamma_j$ curves. Repeating this process for all such pairs $i_1$ and $i_2,$ we see that $\mathcal{C}(d,{\bm a}\bar{b},p_j)$ is a union of $\gamma_j^{m-\ell}$ curves.  Note that we can apply \eqref{BW} provided that ${\bm r}\cdot {\bm y}$ is nonconstant along any of these curves. The exception occurs when there exists a function $\theta: V \rightarrow \{1, \alpha_j, \ldots, \alpha_j^{\gamma_j-1}\}$ such that for any $1\leq \ell' \leq \ell,$ we have
\begin{equation}\label{thetasum}
\sum_{i\in V_{\ell'}}\theta(i)r_i=0 \pmod {p_j}.
\end{equation}
In this case, we use the following trivial bound instead of \eqref{BW}:
\begin{equation}\label{exc}
\Bigg{|}\sum_{{\bm y}\in \mathcal{C}(d,{\bm a}\bar{b},p_j)}e\bigg(-\frac{b_j{\bm r}\cdot {\bm y}}{p_j}\bigg)\Bigg{|}\ll_{m, d} p_j.
\end{equation}
For fixed $\bm{a}$ and $\bm{r},$ we denote by $D(\bm{a}, \bm{r})$ the product of the prime factors $p_j$ of $q$ for which $k_j=1$ and \eqref{thetasum} holds for some $\theta.$
By \eqref{rema}, \eqref{kj2}, \eqref{pjd}, \eqref{BW}, and \eqref{exc}, we obtain
\begin{equation}\label{h}
\sum_{{\bm y}\in \mathcal{C}(d,{\bm a}\bar{b},q)}e\bigg(-\frac{{\bm r}\cdot {\bm y}}{q}\bigg)\ll_{\delta, m, d}
q^{2\delta m+1/2}D(\bm{a}, \bm{r})^{1/2}c_{m, d}^{\omega(q)},
\end{equation}
where $\omega(q)$ is the number of prime divisors of $q$, and $c_{m,d}$ is a constant depending on $m$ and $d.$
Since $q \rightarrow \infty$, we can assume $c_{m, d}^{\omega(q)}\leq q^{\delta m}.$
Hence, putting \eqref{h} into \eqref{remainder2} yields the following:
\begin{align*}
\mathcal{E}&\ll_{\delta, m, d}\frac{q^{1/2+3\delta m-m}}{N}\sum_{\substack{{\bm r} (\text{mod }q)\\{\bm r} \neq 0}}
\prod_{i=1}^m\,\min\bigg{\{} N, \frac{q}{|r_i|}\bigg{\}}\sum_{{\bm a}\in sI}^*D(\bm{a}, \bm{r})^{1/2}\\
&\ll\frac{q^{1/2+3\delta m}}{N}\sum_{\substack{{\bm r} (\text{mod }q)\\{\bm r} \neq 0}}
\prod_{i=1}^m\,\min\bigg{\{} \frac1s, \frac{1}{|r_i|}\bigg{\}}\sum_{D  \mid \tilde{q}}D^{1/2}\mathcal{N}(D),
\end{align*}
where $\mathcal{N}(D):=\#\{{\bm a}\in (sI)^*: D(\bm{a}, \bm{r})=D\}.$
If we let $\rho({\bm r}, D)$ be the proportion of integer vectors ${\bm a}$ in $(sI)^*$ such that $D(\bm{a}, \bm{r})=D,$
then
\begin{equation*}
\mathcal{N}(D)\sim \rho({\bm r}, D)s^{m-1}\vol(I).
\end{equation*}
Thus, it follows that
\begin{equation}
\mathcal{E}\ll_{\delta, m, d, I}q^{-1/2+3\delta m}s^m\sum_{D \mid \tilde{q}}D^{1/2}
\sum_{\substack{{\bm r} (\text{mod }q)\\{\bm r} \neq 0}}\rho({\bm r}, D)\prod_{i=1}^m\,\min\bigg{\{} \frac1s, \frac{1}{|r_i|}\bigg{\}}.
\end{equation}
In order to prove Lemma \ref{LemmaRemainder}, we show the following:
\begin{equation}\label{maineq}
D^{1/2}\sum_{\substack{{\bm r} (\text{mod }q)\\{\bm r} \neq 0}}\rho({\bm r}, D)\prod_{i=1}^m\,\min\bigg{\{} \frac1s, \frac{1}{|r_i|}\bigg{\}}
\ll_{\delta_0, m, d, I}q^{1/2-\delta_0}s^{-m}.
\end{equation}
Note that the number of divisors $D$ of $\tilde{q}$ is $\ll_{\epsilon}q^{\epsilon}.$
Thus, for $\delta$ small enough in terms of $m$ and $\delta_0,$ \eqref{maineq} implies
$$\mathcal{E}\ll_{\delta, \delta_0, m, d, I} q^{-\delta_0/2},$$
which completes the proof of the lemma.

Now, for a nonempty subset $\mathscr{L}$ of $V=\{1, 2, \ldots, m\},$
we let $\sum(\mathscr{L})$ denote the sum on the left-hand side of \eqref{maineq} over the vectors ${\bm r} \pmod{q}$
such that $r_i\neq 0$  if and only if $i\in \mathscr{L}.$ There are $2^m-1$ such subsums $ \sum(\mathscr{L}),$ and therefore it suffices to show, for each $\mathscr{L},$ that
\begin{equation}\label{sumL}
\sum(\mathscr{L})\ll_{m, \delta_0, d, I} D^{-1/2}q^{1/2-\delta_0}s^{-m}.
\end{equation}
For convenience, we can assume that $\mathscr{L}=\{1, 2, \ldots, L\}$ for some $1\leq L\leq m.$
It follows that
\begin{align*}
\sum(\mathscr{L})&=\sum_{0<|r_1|, \ldots, |r_L|\leq q/2}\rho({\bm r}, D)\prod_{i=1}^m\,\min\bigg{\{} \frac1s, \frac{1}{|r_i|}\bigg{\}}\\
&\leq \frac{1}{s^{m-L}}\sum_{0<|r_1|, \ldots, |r_L|\leq q/2}\rho({\bm r}, D)\prod_{i=1}^L\frac{1}{|r_i|}.
\end{align*}
%For each $\bm r,$ we let $D_i(\bm r)=(r_i, D)$ for $1\leq i\leq L.$
We consider a $L$-tuple $\mathscr{D}=(D_1, \ldots, D_L)$ with $D_i \mid D,$ $1\leq i\leq L$,
and set $$\mathscr{M}(\mathscr{D}):=\max\{\rho(\bm r, D): 0<|r_i|\leq q/2, (r_i, D)=D_i\}.$$
Observe that
\begin{align*}
\sum_{\substack{0<|r_1|, \ldots, |r_L|\leq q/2\\(r_i, D)=D_i}}\rho({\bm r}, D)\prod_{i=1}^L\frac{1}{|r_i|}
\leq \frac{\mathscr{M}(\mathscr{D})}{D_1\cdots D_L}\sum_{0<|e_i|\leq q/(2D_i)}\prod_{i=1}^{L}\frac{1}{|e_i|}
\ll\frac{\mathscr{M}(\mathscr{D})(\log q)^L}{D_1\cdots D_L}.
\end{align*}
Note that the number of such $L$-tuples $\mathscr{D}$ is $\ll q^{\epsilon}.$ Thus,
\eqref{sumL} holds  if we can show that for each $\bm r$ with $(r_i, D)=D_i,$
\begin{equation}\label{maineq1}
\rho(\bm r, D)\ll_{m, \delta_0, d, I} D^{-1/2}q^{1/2-\delta_0-2\epsilon}s^{-L}\prod_{i=1}^{L}D_i.
\end{equation}
Obviously, \eqref{maineq1} is true if the right-hand side of \eqref{maineq1} is strictly bigger than $1.$
We therefore assume that $D_1, \ldots, D_L$ satisfy
\begin{equation*}
\prod_{i=1}^{L}D_i\leq D^{1/2}s^Lq^{-1/2+\delta_0/2+2\epsilon}.
\end{equation*}
Also, since $s\leq q^{1/(2m)-\delta_0},$ we have
\begin{equation}\label{prodDi}
\prod_{i=1}^{L}D_i\leq D^{1/2}q^{L/(2m)-1/2+(1-L)\delta_0+2\epsilon}\leq D^{1/2}q^{-\delta_0/2}.
\end{equation}
The following pages contain the proof of \eqref{maineq1}, thus completing the proof of Lemma \ref{LemmaRemainder}.

{\bf Proof of \eqref{maineq1}.}
For a prime divisor  $p_j$ of $D,$ we let
$m_j$ be the number of components $r_i=0 \pmod {p_j}.$ Then it can be easily seen that
\begin{equation}\label{prodDi2}
\prod_{i=1}^{L}D_i=\prod_{p_j \mid D}p_j^{m_j}.
\end{equation}
Consider a vector $\bm a$ which contributes to $\rho(\bm r, D)$ and the corresponding partition $\mathscr{P}(\bm a,  p_j).$
Recall that such $\bm a$ satisfies \eqref{thetasum}.  Since $r_i=0$ for $L<i\leq m,$ we can view $\mathscr{P}(\bm a,  p_j)$ as a partition of $\{1, \ldots, L\}.$ Note that if $r_i\neq 0 \pmod{p_j},$ then the subset $V_{\ell'}$ containing the index $i$
has more than one element. Thus, there are at most $\big[\frac{L-m_j}{2}\big]$ such subsets $V_{\ell'}.$
Now, in each $V_{\ell'},$ we choose the largest index $i(\ell')$ for which $r_{i(\ell')}\neq 0 \pmod{p_j}.$
Then, we see that $V_{\ell'}$ produces the following independent congruences:
$$A_i-A_{i(\ell')}=0\pmod{p_j}, ~~  i\in V_{\ell'}\setminus \{i(\ell')\}.$$
Since each of the $L-m_j$ indices $i\neq i(\ell')$ with $r_i\neq 0 \pmod{p_j}$ corresponds to exactly one of these independent congruences, the number of such congruences, say $\tilde{m}_j,$ satisfies
\begin{equation}\label{tildem}
\tilde{m}_j\geq L-m_j-\Big[\frac{L-m_j}{2}\Big].
\end{equation}
We put together all these congruences for all the prime divisors $p_j$ of $D.$
Note that $m_j=0$ for some $p_j$, since otherwise \eqref{prodDi2} implies that $D$ divides $\prod_{i=1}^{L}D_i,$
which contradicts \eqref{prodDi}. Thus, every index $i\in \{1, \ldots, L\}$ appears in the congruences.
Also, the vectors $\bm a$ which satisfy all these congruences lie on a lattice whose fundamental parallelepiped has volume
$\prod_{p_j\mid D}p_j^{\tilde{m}_j}.$

We remark that all the vectors $\bm a$ which contribute to $\rho(\bm r, D)$ are placed on several lattices.
The number of such lattices is $\ll_{\epsilon} q^{\epsilon},$ and therefore we can consider only one fixed lattice.
In the following, we count the vectors $\bm a$ in $\rho(\bm r, D)$ that satisfy a fixed set of congruences described above.

It suffices to count the $m$-tuples $(A_1, \ldots, A_m)$ which satisfy the congruences, since $\bm a$ is uniquely determined by such a $m$-tuple.
Recall that $A_m=0$ by definition. Using the condition $\bm a\in (sI)^*,$ we can bound the integers $A_{m-1}, \ldots, A_L$
by some positive constant, say $sc_I.$ Hence there are at most $(sc_I)^{m-L}$ choices for $(A_{m-1}, \ldots, A_L).$
Now fix $A_{m-1}, \ldots, A_L.$ We first consider the congruences that involve $A_{L-1}-A_L$ and put them together to obtain
$$A_{L-1}-A_L=0\pmod{d_{L-1}}$$ for some $d_{L-1}\mid D.$ This gives at most $c_I([s/d_{L-1}+1])$ possible values for $A_{L-1}.$ Similarly, we consider the congruences involving $A_{L-2}-A_{L}$  and $A_{L-2}-A_{L-1}.$ Putting them together
yields one congruence $$A_{L-2}=B_{L-2}\pmod{d_{L-2}},$$ where $d_{L-2}$ is some divisor of $D$, and $B_{L-2}\pmod{d_{L-2}}$ is an integer
uniquely determined in terms of $A_{L-1}$ and $A_{L}.$ This implies that for each value of $A_{L-1},$ there exist at most
$c_I([s/d_{L-2}+1])$ possible values for $A_{L-2}.$
We repeat this argument for $A_{L-3}, \ldots, A_1$ in order.
Here, we can see that
\begin{equation}\label{proddi}
\prod_{i=1}^{L-1}d_i=\prod_{p_j\mid D}p_j^{\tilde{m}_j}.
\end{equation}
It follows that the number of vectors $\bm a$ on the lattice is bounded by
\begin{equation*}
s^{m-L}\prod_{i=1}^{L-1}\Big(\Big[\frac{s}{d_i}\Big]+1\Big) \ll_m s^{m-L}\prod_{i=1}^{L-1}\frac{\max\{s, d_i\}}{d_i}
=s^{m-L+m_0}\prod_{\substack{i=1\\d_i<s}}^{L-1}\frac{1}{d_i},
\end{equation*}
where $m_0$ denotes the number of $d_i$'s less than $s.$
This implies that 
\begin{equation*}
\rho(\bm r, D)\ll_{\epsilon}\frac{s^{m-L+m_0}q^{\epsilon}}{\#(sI)^*\prod_{d_i<s}d_i}\ll_{I}\frac{s^{1-L+m_0}q^{\epsilon}}{\prod_{d_i<s}d_i}.
\end{equation*}
Hence \eqref{maineq1} holds provided that we show the following:
\begin{equation}\label{maineq1.5}
D\ll q^{1-3\delta_0}s^{-2m_0-2}\Big(\prod_{i=1}^LD_i^2\Big)\Big(\prod_{d_i<s}d_i^2\Big).
\end{equation}
Using $s\leq q^{1/(2m)-\delta_0},$ we see that $s^{2m}\leq q^{1-2m\delta_0}\leq q^{1-4\delta_0}.$
Therefore  \eqref{maineq1.5} follows from
\begin{equation}\label{maineq2}
D\ll q^{\delta_0}s^{2(m-1-m_0)}\Big(\prod_{i=1}^LD_i^2\Big)\Big(\prod_{d_i<s}d_i^2\Big).
\end{equation}
It remains to prove \eqref{maineq2}.
From  \eqref{prodDi2}, \eqref{proddi} and \eqref{tildem}, we can deduce that 
\begin{equation}\label{DL}
\Big(\prod_{i=1}^LD_i\Big)\Big(\prod_{i=1}^{L-1}d_i^2\Big)=\prod_{p_j\mid D}p_j^{m_j+2\tilde{m}_j}
\geq \prod_{p_j\mid D}p_j^{m_j+(L-m_j)}=D^L.
\end{equation}
Thus, we get
\begin{align*}
\Big(\prod_{i=1}^LD_i\Big)\Big(\prod_{d_i<s}d_i^2\Big)=\Big(\prod_{i=1}^LD_i\Big)\Big(\prod_{i=1}^{L-1}d_i^2\Big)
\Big(\prod_{d_i\geq s}d_i^2\Big)^{-1}\geq D^{L-2(L-1-m_0)}=D^{2m_0+2-L}.
\end{align*}
Note that if $2m_0+2> L,$ then \eqref{maineq2} is true. We thus assume $2m_0+2\leq L.$
Since $$2(m-1-m_0)\geq2m-L\geq L,$$ equation  \eqref{maineq2} holds if we show
\begin{equation}\label{maineq3}
D\ll s^L.
\end{equation}
From  \eqref{DL} and \eqref{prodDi}, we derive
\begin{equation*}
D^L\leq \Big(\prod_{i=1}^LD_i\Big)\Big(\prod_{i=1}^{L-1}d_i^2\Big) \leq D^{1/2}q^{-\delta_0/2}\Big(\prod_{i=1}^{L-1}d_i^2\Big),
\end{equation*}
which implies that
\begin{equation}\label{proddi2}
\prod_{i=1}^{L-1}d_i>D^{(L-1/2)/2}.
\end{equation}
On the other hand, we return to the aforementioned set of congruences of the form $$A_{i_1}-A_{i_2}=0 \pmod{p_j}.$$
Recall that we have $\tilde{m}_j$ such congruences for each $p_j.$ For each fixed pair $(i_1, i_2),$ we combine
all the congruences involving $A_{i_1}-A_{i_2}$ to obtain one congruence
\begin{equation}\label{di12}
A_{i_1}-A_{i_2}=0\pmod{d_{i_1, i_2}}.
\end{equation}
Note that this is just a different arrangement of the set of congruences above. In particular, we see that, by \eqref{proddi},
\begin{equation}\label{proddi3}
\prod_{1\leq i_1<i_2\leq L}d_{i_1, i_2}=\prod_{p_j\mid D}p_j^{\tilde{m}_j}=\prod_{i=1}^{L-1}d_i.
\end{equation}
It follows from \eqref{di12} that $\prod d_{i_1, i_2}$ divides
\begin{equation*}
\prod_{1\leq i_1<i_2\leq L}(A_{i_1}-A_{i_2}).
\end{equation*}
Since each $A_i$ is bounded by $sc_I,$ we have
$$\prod_{1\leq i_1<i_2\leq L}(A_{i_1}-A_{i_2})\ll_{I}s^{L(L-1)/2}.$$
Hence, by \eqref{proddi2} and \eqref{proddi3},
\begin{equation}\label{proddi4}
D^{(L-1/2)/2}<\prod_{i=1}^{L-1}d_i\ll_{I}s^{L(L-1)/2}.
\end{equation}
From \eqref{proddi4}, we deduce that $D\ll s^L,$ which proves \eqref{maineq3}.
The proof of Lemma \ref{LemmaRemainder} is therefore complete.
\end{proof}

\section{A necessary and sufficient condition}

In this section we show that the obstruction to being Poissonian along a sequence for $n^d\alpha\mod 1, d\geq 2$, is the same as for $n^2\alpha\mod 1$. This obstruction consists in the presence of large square factors in the denominators of good approximants. Theorem \ref{curious} is an easy consequence of the following result.

\begin{theorem}\label{main}
Let $d\geq 2$ be an integer, and let $\alpha$ be an irrational number for which there are infinitely many rationals $b_j/q_j$ satisfying
\begin{equation}\label{goodapprox}
\bigg{|}\alpha-\frac{b_j}{q_j}\bigg{|}<\frac{1}{q_j^{d+1}}.
\end{equation}
Then the following are equivalent.
\begin{enumerate}
\item There exists a sequence $N_j\rightarrow\infty$ with $\frac{\log N_j}{\log q_j}\rightarrow 1$ such that $n^d\alpha\mod 1$ is Poissonian along $N_j$.
\item There exists a sequence $N_j\rightarrow\infty$ with $\frac{\log N_j}{\log q_j}\rightarrow 1$ such that $n^2\alpha\mod 1$ is Poissonian along $N_j$.
\item Letting $\tilde{q}_j$ denote the square free part of $q_j$, we have \begin{equation*}
\lim_{j\to\infty}\frac{\log \tilde{q}_j}{\log q_j}=1.
\end{equation*}
\end{enumerate}
\end{theorem}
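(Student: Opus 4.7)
The plan is to exploit that condition (3) does not depend on $d$: it suffices to prove $(1)\Leftrightarrow (3)$ for every $d\geq 2$, and the specialization $d=2$ then yields $(2)\Leftrightarrow (3)$. The forward implication will follow from Theorem \ref{T2} combined with the Diophantine hypothesis \eqref{goodapprox}, and the converse from a clustering count producing the same obstruction for every $d$.

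For the implication $(3)\Rightarrow (1)$: for each $m\geq 2$ I will choose a sequence $\delta_0(j)\downarrow 0$ so slow that the value $\delta(m,\delta_0(j))$ supplied by Theorem \ref{T2} dominates $1-\log\tilde q_j/\log q_j$ for large $j$, which is possible by (3). Setting $N_j=q_j^{1-\delta_0(j)}$, Theorem \ref{T2} will give $R^{(m)}(N_j,d,b_j/q_j,f)\to \int_{{\bf R}^{m-1}} f\,d{\bf x}$ for every $f\in C_c^\infty$. To transfer from $b_j/q_j$ to $\alpha$ I would use \eqref{goodapprox}, which yields $|n^d\alpha-n^d b_j/q_j|\leq N_j^d/q_j^{d+1}=o(1/N_j)$ for $1\leq n\leq N_j$. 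Since $F_N$ is $O(N)$-Lipschitz on the scale $1/N$, the $m$-level correlations for $\alpha$ and for $b_j/q_j$ will agree up to $o(1)$, giving $R^{(m)}(N_j,d,\alpha,f)\to \int f\,d{\bf x}$.

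For the implication $(1)\Rightarrow (3)$, by contrapositive: suppose (3) fails, so along a subsequence $\log\tilde q_j/\log q_j\leq 1-c$ for some $c>0$. The elementary bound $u_j:=\max\{u:u^2\mid q_j\}\geq (q_j/\tilde q_j)^{1/2}$ then gives $u_j\geq q_j^{c/2}$; write $q_j=u_j^2v_j$. Fix any candidate sequence $N_j$ with $\log N_j/\log q_j\to 1$, and set $\delta_0(j)=1-\log N_j/\log q_j\to 0$. The key observation is that whenever $u_j\mid n_i$ for all $i$, writing $n_i=u_j m_i$ and using $d\geq 2$ yields
\[
b_j n_i^d/q_j \;=\; b_j u_j^{d-2}m_i^d/v_j\pmod 1,
\]
so these values lie on the lattice $(1/v_j){\bf Z}/{\bf Z}$. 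The $m$-tuples producing a resonant mass at the origin of the test function will be those with $m_i$ distinct in $[1,N_j/u_j]$ and satisfying
\[
m_1^d\equiv\cdots\equiv m_m^d\pmod{v_j/(v_j,\, b_j u_j^{d-2})}.
\]
A Jensen (convexity) bound on the fibres of the $d$-th power map modulo this effective modulus will produce at least of order $N_j^m/(u_j^m v_j^{m-1})=(N_j/q_j)^{m-1}u_j^{m-2}$ such tuples. With $u_j\geq q_j^{c/2}$ and $N_j=q_j^{1-\delta_0(j)}$, dividing by $N_j$ gives a lower bound of order $q_j^{c(m-2)/2-(m-1)\delta_0(j)}$, which diverges for $m=3$ once $\delta_0(j)<c/4$. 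Choosing $f\geq 0$ with $f(0)>0$ and support near the origin will then force $R^{(m)}(N_j,d,b_j/q_j,f)\to \infty$, contradicting Poissonian convergence to $\int f\,d{\bf x}$. The Diophantine transfer from the previous paragraph carries the failure to $\alpha$, disproving (1); the specialization $d=2$ likewise disproves (2).

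The main obstacle I foresee is the sharp lower count on resonant $m$-tuples. Jensen delivers the coarse bound, but one must verify that the effective modulus $v_j/(v_j,b_j u_j^{d-2})$ does not collapse to something too small — so that the resonant mass truly lands at the origin, rather than on a coset of the lattice that the test function misses — and must control the fibres of $x\mapsto x^d$ modulo that modulus uniformly in $j$, despite their dependence on $\gcd(v_j,d)$ and on the prime factorization of $v_j$. A secondary point to watch is that the argument must apply to every admissible $N_j$ with $\log N_j/\log q_j\to 1$, not only to a convenient distinguished choice.
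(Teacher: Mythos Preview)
Your overall strategy matches the paper's: reduce to $(1)\Leftrightarrow(3)$, use Theorem~\ref{T2} with a Diophantine transfer for $(3)\Rightarrow(1)$, and a clustering lower bound for the converse. For $(3)\Rightarrow(1)$ you are essentially reproducing the paper's construction; just take care that $N_j$ must not depend on $m$ while $\delta(m,\delta_0)$ does. The paper resolves this with a tiered diagonalization: for each $k\ge 2$ apply Theorem~\ref{T2} with $\delta_0=(8k)^{-1}$ simultaneously for all $m\le k$, obtain a single $\delta(k)$, choose $j_k$ so that $\tilde q_j\ge q_j^{1-\delta(k)}$ for $j\ge j_k$, and put $N_j=\lfloor q_j^{1-1/(4k)}\rfloor$ on the block $j_k\le j<j_{k+1}$.

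For $(1)\Rightarrow(3)$ your plan works but is needlessly elaborate, and the obstacles you flag are not real. In your notation $q_j=u_j^2v_j$: instead of taking $u_j\mid n_i$ and counting collisions of $m_i^d$ modulo an effective modulus via Jensen, simply take $n_i$ divisible by $u_jv_j$. Writing $n_i=u_jv_jk_i$ gives, for every $d\ge 2$,
\[
\frac{b_jn_i^d}{q_j}=\frac{b_j(u_jv_jk_i)^d}{u_j^2v_j}=b_j\,u_j^{d-2}v_j^{d-1}k_i^d\in\mathbf{Z},
\]
so every such fractional part is exactly $0$. There are $\gg(N_ju_j/q_j)^m$ distinct $m$-tuples of this kind, each contributing $f(0)$, whence $R^{(m)}(N_j,d,b_j/q_j,f)\gg f(0)(N_ju_j/q_j)^m/N_j$, which diverges for $m$ large (this is the paper's Lemma~\ref{div}). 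No effective modulus, no fibre structure, no Jensen; in particular, a ``collapse'' of $v_j/(v_j,b_ju_j^{d-2})$ would only \emph{help} your count, and even in your version the resulting differences of fractional parts are exactly $0$, so the mass really sits at the origin.

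There is one genuine gap in your transfer step for the converse. To negate (1) you must treat \emph{every} $N_j$ with $\log N_j/\log q_j\to 1$, including $N_j\ge q_j$, and then $N_j^{d+1}/q_j^{d+1}\not\to 0$, so your ``transfer from the previous paragraph'' fails as stated. The paper's fix uses $f\ge 0$: restrict the clustering count to $n_i\in J=\{1,\dots,\lfloor q_j^{1-\delta/2}\rfloor\}\subset\{1,\dots,N_j\}$; on $J$ the scaled distance is $\le N_j\cdot q_j^{d(1-\delta/2)}/q_j^{d+1}\to 0$, so Lemma~\ref{approx} transfers the divergence on $J^m$ from $b_j/q_j$ to $\alpha$, and positivity of $f$ then forces divergence of the full correlation sum.
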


Conditions {\em (2)} and {\em (3)} are equivalent by \cite[Theorem 1]{Z03}. Hence it is enough to prove that {\em (1)} is equivalent to {\em (3)}. The implication {\em (3)} $\Longrightarrow$ {\em (1)} requires Theorem \ref{T2}. The other direction follows the arguments in \cite[Section 3]{Z03}, and requires the two following lemmas. The first is a divergency principle for $m$-correlations (see \cite[Lemma 6]{RSZ01}).
\begin{lemma}\label{div}
Let $q=uv^2$ with $v>q^{\delta}$ for some $\delta>0$. Let $\eta>1$, and suppose that $\log N/\log q>\eta$. Let $f\in C^{\infty}_c({\bf R}^{m-1})$ be a non-negative test function which is non-vanishing at the origin. Then, for every integer $b$ and every $d\geq 2$, 
\begin{equation*}
R^{(m)}\bigg(N,d,\frac{b}{q},f\bigg)\gg_{m,\delta}\frac{1}{N}f(0)\bigg(\frac{Nv}{q}\bigg)^m.
\end{equation*}
%In particular, for $m$ large enough in terms of $\delta$ and $\eta$, the $m$-level correlation $R^{(m)}(N,b/q,d,f)$ diverges to infinity for every $d\geq 2$.
\end{lemma}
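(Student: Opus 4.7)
The plan is to exhibit an unusually large cluster of indices $n \in [1, N]$ for which $n^d b / q \pmod 1$ takes the same value, and to show that this cluster by itself contributes enough to the $m$-level correlation sum to force the lower bound. The essential arithmetic observation I will use is the following: if $v \mid n$, then $v^d \mid n^d$, and since $d \geq 2$ this gives $v^2 \mid n^d b$. Writing $q = uv^2$, this forces $n^d b / q \pmod 1$ to lie in the $u$-element set $\{0, 1/u, 2/u, \dots, (u-1)/u\}$ for every $n$ divisible by $v$.

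With this observation in hand, the counting is straightforward. The number of multiples of $v$ in $[1, N]$ is at least $\lfloor N/v \rfloor \geq N/(2v)$, and they are distributed among only $u$ possible values of $n^d b/q \bmod 1$, so by the pigeonhole principle there exist some $k_0 \in \{0, \dots, u-1\}$ and a set $S \subseteq v\mathbb{Z} \cap [1, N]$ with
\[
|S| \geq \frac{N/(2v)}{u} = \frac{Nv}{2q}
\]
and $n^d b / q \equiv k_0 / u \pmod 1$ for every $n \in S$. The hypothesis $v > q^\delta$ together with $\log N / \log q > \eta > 1$ yields $|S| \gg q^{\eta + \delta - 1}$, which tends to infinity with $q$; in particular $|S| \geq 2m$ once $q$ is sufficiently large in terms of $m$ and $\delta$.

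I will then restrict the sum defining $R^{(m)}(N, d, b/q, f)$ to those ordered $m$-tuples $(n_1, \dots, n_m)$ of distinct elements of $S$. For any such tuple, every consecutive difference $(n_i^d - n_{i+1}^d) b/q$ is an integer, so the vector appearing as the argument of $F_N$ lies in $\mathbb{Z}^{m-1}$; by the $\mathbb{Z}^{m-1}$-periodicity of $F_N$ its value equals $F_N(\mathbf{0}) = \sum_{\bm{\ell}} f(N\bm{\ell}) \geq f(\mathbf{0})$, using the non-negativity of $f$. The number of ordered distinct $m$-tuples in $S$ is $|S|(|S|-1)\cdots(|S|-m+1) \gg_m |S|^m \gg (Nv/q)^m$, so the prefactor $1/N$ in the definition of $R^{(m)}$ produces exactly the claimed lower bound $\gg_{m,\delta} N^{-1} f(\mathbf{0}) (Nv/q)^m$.

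There is no serious obstacle: once the divisibility $v^2 \mid n^d$ (for $v \mid n$, $d \geq 2$) is noticed, the rest is pigeonhole plus counting ordered distinct tuples. The role of the assumption $v > q^\delta$ is only to guarantee that $|S| \geq m$ for $q$ sufficiently large, so that the contributing tuples with distinct components actually exist; this is why the constant is allowed to depend on $\delta$.
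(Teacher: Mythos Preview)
Your proof is correct and follows essentially the same strategy as the paper: isolate a large set of indices $n\le N$ on which $\{bn^d/q\}$ is constant, and count ordered distinct $m$-tuples from that set. The only difference is a minor implementation detail. You take multiples of $v$, observe that $\{bn^d/q\}\in\{0,1/u,\dots,(u-1)/u\}$, and then invoke the pigeonhole principle to extract a set $S$ of size $\gg Nv/q$ with a common residue. The paper instead takes multiples of $uv$: writing $n=uvn'$ one has $bn^d/q=bu^{d-1}v^{d-2}(n')^d\in\mathbb{Z}$, so $\{bn^d/q\}=0$ for every such $n$, and the pigeonhole step is unnecessary. Both routes produce a cluster of size $\gg Nv/q$ and hence the same lower bound.
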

\begin{proof}
By the definition of the $m$-level correlation, 
\begin{equation*}
R^{(m)}\bigg(N,d,\frac{b}{q},f\bigg)=\frac{1}{N}\sum_{\substack{1\leq n_1,\ldots,n_m\leq N\\n_j\text{ distinct}}}f\bigg(\dots,N\bigg{\{}\frac{bn_j^d}{q}\bigg{\}}-N\bigg{\{}\frac{bn_{j+1}^d}{q}\bigg{\}},\dots\bigg).
\end{equation*}
Since $f\geq 0$, it is enough to estimate the contribution of the terms $(n_1,\dots,n_m),$ with $n_j$ distinct, such that $n_1,\dots,n_m$ are all divisible by $uv$. There are $\gg_m[N/uv]^m=[Nv/q]^m$ such $m$-tuples. If $n=uvn'$, then, since $q=uv^2$, 
\begin{equation*}
\bigg{\{} \frac{bn^d}{q}\bigg{\}}=\{bu(n')^2(un'v)^{d-2}\}=0,
\end{equation*}
and therefore
\begin{equation*}
R^{(m)}\bigg(N,d,\frac{b}{q},f\bigg)\gg_{m,\delta}\frac{1}{N}f(0)\bigg(\frac{Nv}{q}\bigg)^m,
\end{equation*}
as wanted.
%By hypothesis, we have that $v>q^{\delta}$ and $N\gg q^{\eta}$, with $\eta>1-\delta$. Hence \eqref{e1} yields $R^{(m)}(N,b/q,d,f)\gg q^s$, with
%\begin{equation*}
%s=\eta(m-1)+m\delta-m=m(\eta-(1-\delta))-\eta,
%\end{equation*}
%which is positive if $m>\eta/(\eta-(1-\delta))>0$. Thus, for $m$ sufficiently large, $R^{(m)}(N,b/q,d,f)$ diverges.
\end{proof}

The next lemma allows one to pass from the $m$-level correlation of a family of finite sequences to another family, which is close enough to the original one. Let $\mathcal{N}=\{x_N(n)\colon n\leq N\}$ and $\mathcal{N}'=\{x'_N(n)\colon n\leq N\}$ be two families of sequences in $[0,1)$. We define for each $N$ the scaled distance between the corresponding sequences to be
\begin{equation*}
\varepsilon_N(\mathcal{N},\mathcal{N}'):=N\max_{n\leq N}|x_N(n)-x'_N(n)|.
\end{equation*}
Recall that the $m$-level correlation for the family $\mathcal{N}$ is defined for every $f\in C^{\infty}_c({\bf R}^{m-1})$ by
\begin{equation*}
R^{(m)}(N,\mathcal{N},f):=\frac{1}{N}\sum_{\substack{1\leq n_1,\ldots,n_m\leq N\\n_j\text{ distinct}}}F_N\Big(x_N(n_1)-x_N(n_2),\dots,x_N(n_{m-1})-x_N(n_m)\Big),
\end{equation*}
where $F_N({\bf y}):=\sum_{{\bf l}\in{\bf Z}^{m-1}}f(N({\bf l}+{\bf y}))$.

\begin{lemma}\cite[Lemma 5]{RSZ01}\label{approx}
Assume that $\mathcal{N},\mathcal{N}'\subset [0,1)$ are two families of sequences with $\varepsilon_N(\mathcal{N},\mathcal{N}')\rightarrow 0$ as $N\rightarrow \infty$. Then for every $f\in C^{\infty}_c({\bf R}^{m-1})$,
\begin{equation*}
\big{|}R^{(m)}(N,\mathcal{N},f)-R^{(m)}(N,\mathcal{N}',f)\big{|}\leq R^{(m)}(N,\mathcal{N},f_{+})\varepsilon_N(\mathcal{N},\mathcal{N}')
\end{equation*}
for $N$ sufficiently large, with $f_{+}\in C^{\infty}_c({\bf R}^{m-1}),$ a non-negative function depending only on $f$.
\end{lemma}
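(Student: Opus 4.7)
The plan is to estimate termwise the difference of the sums defining $R^{(m)}(N,\mathcal{N},f)$ and $R^{(m)}(N,\mathcal{N}',f)$ via the mean value theorem, and then to repackage the resulting derivative bound as the periodization of a single non-negative smooth compactly supported $f_+$ depending only on $f$.

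First I would fix an $m$-tuple $(n_1,\dots,n_m)$ of distinct indices and set $y_j := x_N(n_j)-x_N(n_{j+1})$, $y'_j := x'_N(n_j)-x'_N(n_{j+1})$; the definition of $\varepsilon_N$ immediately gives $N|y_j-y'_j| \leq 2\varepsilon_N(\mathcal{N},\mathcal{N}')$ for every $j$. Applying the fundamental theorem of calculus term by term in
$$F_N(y)-F_N(y') = \sum_{\ell\in{\bf Z}^{m-1}} \bigl[f(N(\ell+y))-f(N(\ell+y'))\bigr]$$
bounds the absolute value of the $\ell$-th summand by $2\varepsilon_N(\mathcal{N},\mathcal{N}')\sum_j \sup_{|h|\leq 2\varepsilon_N/N}|\partial_j f(N(\ell+y)+Nh)|$.

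To produce a dominant $f_+$ that is smooth and independent of $N$, I would introduce the upper envelope $\tilde f(x) := 2\sum_j \sup_{|h|\leq 1}|\partial_j f(x+h)|$. Since $f$ is smooth and compactly supported, $\tilde f$ is bounded with compact support, and one may choose a non-negative $f_+ \in C^\infty_c({\bf R}^{m-1})$ with $f_+ \geq \tilde f$ pointwise (for instance by mollifying a slightly enlarged bump). Once $N$ is large enough that $2\varepsilon_N(\mathcal{N},\mathcal{N}') \leq 1$, the unit neighborhood in the definition of $\tilde f$ contains every relevant shift, so the previous estimate becomes $\varepsilon_N(\mathcal{N},\mathcal{N}')\cdot f_+(N(\ell+y))$. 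Summing over $\ell\in{\bf Z}^{m-1}$ gives $|F_N(y)-F_N(y')| \leq \varepsilon_N(\mathcal{N},\mathcal{N}')\,(f_+)_N(y)$, after which summing over ordered $m$-tuples of distinct indices and dividing by $N$ yields the claimed inequality.

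The only subtle point is arranging the dominant $f_+$ to be independent of $N$; this is possible precisely because $\varepsilon_N\to 0$, so the fixed unit neighborhood built into $\tilde f$ eventually absorbs the $N$-dependent neighborhood of size $2\varepsilon_N/N$ coming from the mean value theorem. Apart from this observation, the argument is routine bookkeeping: MVT on each lattice translate, a single smooth majorization of a continuous compactly supported function, and rearrangement of the double sum.
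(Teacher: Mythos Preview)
Your argument is correct and is precisely the standard one: a mean-value estimate on each lattice translate, majorization by a fixed smooth compactly supported envelope, and resummation. Note that the paper does not actually supply a proof of this lemma---it is quoted verbatim from \cite[Lemma~5]{RSZ01}---so there is no in-paper argument to compare against; your write-up matches the expected proof from that reference.

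Two cosmetic remarks. First, in your perturbation bound $\sup_{|h|\le 2\varepsilon_N/N}|\partial_j f(N(\ell+y)+Nh)|$ you should specify that $|h|$ is the $\ell^\infty$-norm (the segment joining $N(\ell+y')$ to $N(\ell+y)$ stays within an $\ell^\infty$-ball of radius $2\varepsilon_N$ about $N(\ell+y)$, not necessarily a Euclidean ball of that radius); this does not affect the argument since the same convention in $\tilde f$ makes everything consistent. Second, the passage from $|F_N(y)-F_N(y')|\le \varepsilon_N\,(f_+)_N(y)$ to the final inequality is just the triangle inequality applied to the outer sum; you might state explicitly that centering the envelope at $y$ (rather than $y'$) is what produces $R^{(m)}(N,\mathcal{N},f_+)$ on the right-hand side, matching the asymmetry in the statement.
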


\begin{proof}[Proof of Theorem \ref{main}]
{\em (1)} $\Longrightarrow$ {\em (3)} We prove the contrapositive. Assume that {\em (3)} fails. Then there are infinitely many indices $j$'s and a $\delta>0$ for which in the decomposition $q_j=\tilde{q}_jv_j^2$, with $\tilde{q}_j$ square free, we have $v_j>q_j^{\delta}$. Let $N_j\rightarrow\infty$ be a sequence with $\frac{\log N_j}{\log q_j}\rightarrow 1$. For $j$ large enough we have $N_j\in [q_j^{1-\delta/2},q_j^{1+\delta/2}]$. Consider the two families of sequences \[\mathcal{N}=\Big{\{}\{\alpha n^d\}\colon 1\leq n\leq N_j\Big{\}}\text{ and } \mathcal{N}'=\bigg{\{}\Big{\{}\frac{b_jn^d}{q_j}\Big{\}}\colon 1\leq n\leq N_j\bigg{\}}.\]
Let $f\in C^{\infty}_c({\bf R}^{m-1})$ be a non-negative function that does not vanish at the origin. We want to argue that the $m$-level correlation $R^{(m)}(N_j, \mathcal{N},f)$ diverges as $j\rightarrow\infty$ for $m$ large enough. Since $f$ is non-negative, we can restrict ourselves to considering the contribution of the $m$-tuples ${\bf x}=(x_1,\dots,x_m)\in J^m$, where $J=\Big{\{}1,\dots,\floor*{q_j^{1-\delta/2}}\Big{\}}\subseteq\{1,\dots,N_j\}$. On such ${\bf x}$, the scaled distance between $\mathcal{N}$ and $\mathcal{N}'$ is
\[\varepsilon_{N_j}(\mathcal{N},\mathcal{N}')=N_j\max_{n\in J}\bigg{|}n^d\alpha-\frac{n^db_j}{q_j}\bigg{|}\leq q_j^{1+\delta/2}q_j^{d-\delta}\frac{1}{q_j^{d+1}}=q_j^{-\delta/2}.\]
By Lemma \ref{approx}, we can thus pass to the family $\mathcal{N}'$. It is enough to prove that the contribution of the $m$-tuples ${\bf x}\in J^m$ to $R^{(m)}(N_j, \mathcal{N}',f)$ makes it diverge for $m$ large enough. By the definition of $\mathcal{N}'$ and Lemma \ref{div}, 
\[R^{(m)}(N_j,\mathcal{N}',f)=R^{(m)}\bigg(N_j,d,\frac{b_j}{q_j},f\bigg)\gg_{m,\delta}\frac{1}{N_j}f(0)\bigg(\frac{N_jv_j}{q_j}\bigg)^m\geq q_j^{(\delta/2)(m-1)-1},\]
and therefore $R^{(m)}(N_j, \mathcal{N},f)$ diverges as $j\rightarrow\infty$ while keeping $m$ and $\delta$ fixed, provided $m>1+\frac{2}{\delta}$.

{\em (3)} $\Longrightarrow$ {\em (1)} We construct the required sequence $N_j$. For every integer $k\geq 2$ we define an integer $j_k$ in the following way. Apply Theorem \ref{T2} for every $m\in\{2,\dots,k\}$ with $\delta_0=(8k)^{-1}$. There exists $\delta=\delta(k)>0$ such that for $q\to\infty$ satisfying $\tilde{q}\geq q^{1-\delta(k)}$ and for every residue $b\mod q$ with $(b,q)=1$, the $m$-level correlation for the sequence $n^db\,(\text{mod } q), 1\leq n\leq N$, where $N=\floor{q^{1-1/(4k)}}$, is Poissonian for every $m\in\{2,\dots,k\}$. We apply this to every pair $(b_j,q_j)$. By {\em (3)} there exists $j_k$ such that, for every $j\geq j_k$, we have $\tilde{q}_j\geq q_j^{1-\delta(k)}$ and the $m$-level correlation for the sequence $\{n^db_j\,(\text{mod } q_j)\}, 1\leq n\leq \floor{q_j^{1-1/(4k)}}$, is Poissonian for $m\in\{2,\dots,k\}$. It follows from \eqref{goodapprox} and Lemma \ref{approx} that for $j\geq j_k$ the $m$-level correlation for $m\in\{2,\dots, k\}$ for the sequence $\{n^d\alpha\}, 1\leq n\leq \floor{q_j^{1-1/(4k)}}$, is Poissonian. With $j_k$ defined as above, we now put $N_j=\floor{q_j^{1-1/(4k)}}$ for all those $j\geq j_k$ such that $j<j_{k+1}$. The sequence $N_j$ thus defined has the required properties.
\end{proof}

	\section{Acknowledgements}
	
	The authors are grateful to Bruce Berndt for useful comments and suggestions.

\bibliographystyle{alpha}
 							%	\bibliography{mybibTVK}

\end{document}